\newcommand{\br}[3]{{$#1$}$\lower4pt\hbox{$\tp\atop\raise4pt \hbox{$\scriptscriptstyle{#2}$}$} ${$#3$}}
\newcommand{\tw}[3]{{$#1$}${\,\scriptscriptstyle {#2}}\atop\raise9pt\hbox{$\scriptstyle\tp$} ${$#3$}}
\newcommand{\ttps}[2]{{#1}\raise5pt\hbox{$\lower12pt\hbox{$\scriptstyle\tp$}\atop \lower0pt\hbox{$\tilde\;$}$}\raise4.5pt\hbox{${\scriptstyle{#2}}$}}
\newcommand{\st}[1]{\mbox{${\,\scriptscriptstyle {#1}}\atop\raise5.5pt\hbox{$*$}$}}
\newcommand{\rd}[1]{\mbox{${\,\scriptscriptstyle {#1}}\atop\raise5.5pt\hbox{$\bullet$}$}}
\newcommand{\rt}[1]{\otimes_\chi}
\newcommand{\lt}[1]{\mbox{${\,\scriptscriptstyle {#1}}\atop\raise5.5pt\hbox{$\ltimes$}$}}
\newcommand{\btr}{\raise1.2pt\hbox{$\scriptstyle\blacktriangleright$}\hspace{2pt}}
\newcommand{\btl}{\raise1.2pt\hbox{$\scriptstyle\blacktriangleleft$}\hspace{2pt}}
\newcommand{\lcr}{\raise1.0pt \hbox{${\scriptstyle\rightharpoonup}$}}
\newcommand{\rcr}{\raise1.0pt \hbox{${\scriptstyle\leftharpoonup}$}}
\newcommand{\ttp}{{\lower12pt\hbox{$\tp$}\atop \hbox{$\tilde\;$}}}
\newcommand{\id}{\mathrm{id}}
\newcommand{\Tc}{\mathcal{T}}
\newcommand{\Bc}{\mathcal{B}}
\newcommand{\Ac}{{\mathcal{A}}}
\newcommand{\Ru}{\mathcal{R}}
\newcommand{\Fc}{\mathcal{F}}
\newcommand{\Q}{\mathcal{Q}}
\newcommand{\C}{\mathbb{C}}
\newcommand{\Pbb}{\mathbb{P}}
\newcommand{\Z}{\mathbb{Z}}
\newcommand{\N}{\mathbb{N}}
\newcommand{\tp}{\otimes}
\newcommand{\U}{U}
\newcommand{\ve}{\varepsilon}
\newcommand{\gm}{\gamma}
\newcommand{\dt}{\delta}
\newcommand{\op}{\oplus}
\newcommand{\la}{\lambda}
\newcommand{\tr}{\triangleright}
\newcommand{\tl}{\triangleleft}
\newcommand{\Char}{\mathrm{ch }}
\newcommand{\End}{\mathrm{End}}
\newcommand{\Span}{\mathrm{Span}}
\newcommand{\Hom}{\mathrm{Hom}}
\newcommand{\Rm}{\mathrm{R}}
\newcommand{\diag}{\mathrm{diag}}
\newcommand{\La}{\Lambda}
\newcommand{\g}{\mathfrak{g}}
\renewcommand{\b}{\mathfrak{b}}
\renewcommand{\k}{\mathfrak{k}}
\newcommand{\h}{\mathfrak{h}}
\newcommand{\n}{\mathfrak{n}}
\newcommand{\eps}{\epsilon}
\newcommand{\nn}{\nonumber}
\newcommand{\p}{\mathfrak{p}}
\renewcommand{\l}{\mathfrak{l}}
\newcommand{\si}{\sigma}
\newcommand{\al}{\alpha}
\newcommand{\bt}{\beta}
\newcommand{\be}{\begin{eqnarray}}
\newcommand{\ee}{\end{eqnarray}}
\newtheorem{thm}{Theorem}[section]
\newtheorem{propn}[thm]{Proposition}
\newtheorem{lemma}[thm]{Lemma}
\newtheorem{corollary}[thm]{Corollary}
\newcommand{\parag}{\advance\prg by1 {\noindent\bf\thesection.\the\prg\hspace{6pt}}}
\begin{document}

\title{Equivariant vector bundles over quantum projective spaces}
\author{
A. Mudrov\footnote{This study is supported  by the RFBR grant 15-01-03148.}
\vspace{20pt}\\
{\em \small  In memory of Ludwig Faddeev}
\vspace{10pt}\\
\small Department of Mathematics,\\ \small University of Leicester, \\
\small University Road,
LE1 7RH Leicester, UK\\
[0.1in]
}

\date{ }

\maketitle
\begin{abstract}
We construct
equivariant vector bundles over quantum  projective spaces making use of parabolic
Verma modules over the quantum general linear group.  Using an alternative realization of
the quantized coordinate ring of projective space  as a subalgebra in the algebra of functions on
the quantum group, we reformulate quantum vector bundles
in terms of quantum symmetric pairs. In this way, we prove complete
reducibility of modules over the corresponding coideal stabilizer subalgebras, via the quantum Frobenius reciprocity.
\end{abstract}
{\small \underline{Key words}:  quantum groups,  quantum projective spaces, vector bundles, symmetric pairs.}
\\
{\small \underline{AMS classification codes}: 17B10, 17B37, 53D55.}
\newpage
\section{Introduction}
We quantize equivariant vector bundles over complex projective spaces regarded as one-sided projective modules over
coordinate rings. It is a natural next step in the
deformation quantization programme for Poisson manifolds, after quantization of their algebras of function, \cite{BFFLS},
considered as trivial bundles. In the presence of a symmetry group, quantization becomes a topic of representation theory.

The main idea underlying quantization is to realize the module of  global sections of an associated vector bundle by a vector space of 
linear maps between certain highest weight modules over the classical/quantum total group.
From this point of view the problem was addressed within the theory of dynamical twist in \cite{DM1}, for homogeneous spaces
with Levi isotropy subgroups. It was later realized that trivial vector bundles (e. g., algebras of functions) admit a similar approach 
with regard to 
semi-simple conjugacy classes (consisting of semi-simple elements) with non-Levi isotropy subgroups, \cite{M1,AM}. Those
findings suggest a  uniform quantization scheme for associated vector bundles over all semi-simple conjugacy classes,
making use of highest weight modules.

Technically quantization reduces to the following two questions.
Let $G$ be a reductive complex connected Lie group and $\g$ its Lie algebra. Consider a semisimple conjugacy class $O$ passing through a point $t$ in a fixed maximal torus $T$ and denote by $K\subset G$ the centralizer subgroup of $t$ with the Lie algebra $\k\subset \g$.
Let $U_q(\g)$ denote the standard quanization of the universal enveloping algebra $U(\g)$.  Associated with $t$ is a $U_q(\g)$-module $M$ of the highest weight  $\la=\la(t)$, such that the coordinate ring $\C[O]$ is quantized
as a subalgebra $\C_q[O]\subset \End(M)$.
Given a finite-dimensional irreducible $U_q(\g)$-module $V$, one can ask
\begin{enumerate}
  \item what are the highest weight submodules in $V\tp M$?
  \item  when does  $V\tp M$ split
in a direct sum of those submodules?
\end{enumerate}

The answer to the first question is easy for $K$ a Levi subgroup in $G$. Such are subgroups of maximal rank
whose basis of simple roots is a part of the total root basis of $G$. For such a subgroup, the universal enveloping
algebra is quantized as a Hopf subalgebra $U_q(\k)\subset U_q(\g)$, and  the submodules of interest are
parabolically induced from representations of $U_q(\k)$. Contrary to that, conjugacy classes with non-Levi $K$
have no natural quantum stabilizer in $U_q(\g)$, so  description of highest weight submodules 
in $V\tp M$ is not so straightforward.

The second question is challenging for both Levi and non-Levi $K$. It turns out that the
 answer can be formulated in terms of a contravariant form on
$V\tp M$,
\cite{M3}. The problem reduces to computing the determinant of a finite-dimensional matrix, which we call extremal twist.
In this paper, we apply the complete reducibility criterion of \cite{M3} to the simplest conjugacy classes
with Levi stabilizer subgroups.

We consider  complexification $\Pbb^n$ of the
projective space $\C P^n$ (regarded as a real affine variety). As a variety of rank-1 projectors on $\C^{n+1}$,  $\Pbb^n$ is isomorphic to a conjugacy class of the general linear
group $GL(n+1)$ with the stabilizer subgroup $GL(1)\times GL(n+1)$.
Specifically, different realizations of $\Pbb^n$ by conjugacy classes with the same stabilizer give rise to different Poisson structures on $\Pbb^n$.  They are induced by the Semenov-Tian-Shanski bracket on $GL(n+1)$, \cite{STS}. We show that almost all these Poisson structures admit quantization
of vector bundles on $\Pbb^n$. Moreover,  the quantization can be specialized to a particular value of the deformation  parameter $q\in \C$, provided it is not a root of unity and not an even root of $x_1x_2^{-1}$, where $x_1$ and $x_2$ are the two
different eigenvalues of the class representative $t$.

In accordance with the Serre-Swan theorem \cite{S,Sw}, 
a vector bundle $E$ over a base space $O$ translates to a sheaf of local sections equipped with the structure of a module over the sheaf of local functions on $O$.
It is, in fact, a bimodule, but the two-sided action is an incidental consequence of commutativity, which is  lost
in quantization. So by a vector bundle over a quantum space we understand a one-sided (left or right) projective module over its
non-commutative ``algebra of functions". Our examples of interest are affine varieties, in the classical limit, so we simplify  consideration
to the setting of global functions and global sections.

Under conditions to be  specified below (cf. Proposition \ref{proj_extr_tw}), $V\tp M$ splits into a direct sum of irreducible highest weight submodules
parameterized by irreducible $\k$-submodules in $V$.
They are separated by invariant idempotents, which  belong to $\End(V)\tp \C_q[\Pbb^n]$ and  give rise to projective $\C_q[\Pbb^n]$-modules identified with quantum vector bundles. In the classical limit, their fibers are the corresponding $\k$-submodules in $V$.

Since $\Pbb^n$ is a symmetric space, we  take advantage of an alternative presentation of the algebra $\C_q[\Pbb^n]$ via a quantum symmetric pair, \cite{Let,Kolb}.
There is a one-parameter family of solutions of the Reflection Equation associated with $U_q(\g)$, which
define
 one-dimensional representations of $\C_q[\Pbb^n]$ (regarded as ``points" on the quantum projective space). Every such a solution facilitates a realization
 of $\C_q[\Pbb^n]$ as
a subalgebra in the Hopf dual $\Tc$ to $U_q(\g)$, \cite{DM2}. At the same time, it defines a left coideal subalgebra $\Bc\subset U_q(\g)$
such that $\C_q[\Pbb^n]$ is identified with the subalgebra of $\Bc$-invariants in $\Tc$ under the right translation action.
The algebra $\Bc$ is a deformation of $U(\k')$, where $\k'\simeq \k$ is the isotropy Lie algebra  of the ``quantum
point" in the classical limit.

We prove that every finite-dimensional $U_q(\g)$-module $V$ is completely reducible over $\Bc$ and that the simple submodules are deformations
of irreducible $\k'$-submodules. They are readily constructed out of invariant idempotents from $\End(V)\tp \C_q[\Pbb^n]$
projecting $V\tp M$ onto the corresponding irreducible submodules.
This way we realize projective left $\C_q[\Pbb^n]$-modules by $\Bc$-invariants in $\Tc\tp X$, where
$X$ is a right $\Bc$-module and $\Tc$ is equipped with the right translation action. The invariants also carry a $\U_q(\g)$-action
via the left translations on $\Tc$. This is a deformation of a classical realization of the equivariant vector bundle
associated with a $\k'$-module $X$  (the fiber).

The set up of the paper is as follows. Section 2 contains basic facts about quantum groups and description
of the base module for  $\C_q[\Pbb^n]$.
At the beginning of Section 3, we summarize the results of \cite{M3} that are relevant to the present task.
We use them to study the structure of tensor products of finite-dimensional and the base module  thereafter.
In Section 4, we construct quantum vector bundles as projective (right) $\C_q[\Pbb^n]$-modules, based
on results of Section 3.
Then we switch to quantum symmetric pairs and give a realization of projective (left) $\C_q[\Pbb^n]$-modules
through $\Bc$-invariants, similarly to the classical induced representations.
\section{Preliminaries}
\label{SecPrelim}
Denote by  $\g$ the general linear algebra of complex $(n+1)\times (n+1)$-matrices.
Fix the Cartan subalgebra $\h\subset \g$ as the span of diagonal matrices and identify it with its dual vector space  $\h^*$  via the
trace inner product $(\>.\>,\>.\>)$. The root system $\Rm$ of $\g$ is expressed through the standard orthornomal basis
$\{\ve_i\}_{i=1}^n\subset \h^*$
as $\{\ve_i-\ve_j\}_{i\not =j}$ with the basis of simple positive roots $\Pi^+=\{\al_i\}_{i=1}^n$, where $\al_{i}=\ve_{i}-\ve_{i+1}$, $i=1,\ldots, n$. The subset $\Rm^+$ of all positive roots is $\{\ve_i-\ve_j\}_{i<j}$.

For all $\la\in \h^*$  denote by  $h_\la$ the element of $\h$ satisfying $\mu(h_\la)=(\mu,\la)$, for all $\mu\in \h^*$.
Denote by $\rho\in \h^*$ the half-sum of positive roots.

Suppose that $q\in \C$ is invertible and not a root of unity.
In what follows, we use the shortcuts $\bar q=q^{-1}$  and  $[x,y]_a=xy-ayx$ for $a\in \C$.

By $U_q(\g)$ we understand the standard quantum group \cite{Dr,ChP} generated over the complex field by $e_\al$, $f_\al$, and $q^{\pm h_\al}$,
$\al\in \Pi^+$, subject to
$$
q^{\pm h_\al}e_\bt=q^{\pm (\al,\bt)}e_\bt q^{\pm h_\al},
\quad
[e_\al,f_\bt]=\dt_{\al,\bt}[h_\al]_q,
\quad
q^{\pm h_\al}f_\bt=q^{\mp (\al,\bt)}f_\bt q^{\pm h_\al},\quad \al \in \Pi^+,
$$
where $[h_\al]_q=\frac{q^{h_\al}-q^{-h_\al}}{q-q^{-1}}$, and $q^{h_\al}q^{-h_\al}=1=q^{-h_\al}q^{h_\al}$.
The generators $e_{\al}$ and  $e_{-\al}=f_{\al}$ satisfy the  q-Serre relations
$$
[e_{\pm\al},[e_{\pm\al},e_{\pm\bt}]_q]_{\bar q}=0, \quad \mbox{if }\quad
 (\al,\bt)=-1, \quad\mbox{and }\quad [e_{\pm\al}, e_{\pm\bt}]=0\quad \mbox{if}\quad  (\al,\bt)=0.
$$

Fix the comultiplication on $U_q(\g)$ as
$$\Delta(f_\al)= f_\al\tp 1+q^{-h_\al}\tp f_\al,\quad \Delta(q^{\pm h_\al})=q^{\pm h_\al}\tp q^{\pm h_\al},\quad\Delta(e_\al)= e_\al\tp q^{h_\al}+1\tp e_\al.$$
Then the antipode acts on the generators by the assignment 
$$
\gm( f_\al)=- q^{h_\al}f_\al,\quad \gm( q^{\pm h_\al})=q^{\mp h_\al}, \quad \gm( e_\al)=- e_\al q^{-h_\al}
.
$$
 The counit homomorphism $\eps \colon U_q(\g)\to \C$
returns on the generators $\eps(e_\al)=\eps(f_\al)=0$, and $\eps(q^{\pm h_\al})=1$.

The assignment
$$\si\colon e_\al\to f_\al, \quad\si\colon f_\al\to e_\al, \quad \si\colon h_\al\to -h_\al$$
extends to an algebra involutive automorphism of $U_q(\g)$.
The involution $\omega=\gm^{-1}\circ \si$ preserves comultiplication and is an algebra anti-automorphism of $U_q(\g)$.

Denote by $U_q(\h)$,  $U_q(\g_+)$, $U_q(\g_-)$  the subalgebras in $U_q(\g)$  generated by $\{q^{\pm h_\al}\}_{\al\in \Pi^+}$, $\{e_\al\}_{\al\in \Pi^+}$, and $\{f_\al\}_{\al\in \Pi^+}$, respectively.
The quantum Borel subgroups  $U_q(\b_\pm)=U_q(\g_\pm)U_q(\h)$ are Hopf subalgebras in $U_q(\g)$.
Let $\k$ denote the subalgebra $\g\l(1)\op \g\l(n)\subset \g$ with the basis of simple roots $\Pi^+_\k=\{\al_2,\ldots,\al_n\}$.
The  elements $\{e_\al, f_\al, q^{\pm h_\al}\}_{\al \in \Pi^+_\k}$ generate a quantum subgroup
$U_q(\k)\subset U_q(\g)$.  A Hopf subalgebra $U_q(\p)=U_q(\k)U_q(\b_+)$ is a quantization of  $U(\p)$, where $\p =\k+\g_-$
is a parabolic Lie subalgebra. 

All $U_q(\g)$-modules under consideration are $U_q(\h)$-diagonalized. For any $U_q(\h)$-module
$V$ we denote by $V[\mu]\subset V$ the subspace of weight $\mu \in \h^*$, i. e., the
set of vectors $v\in V$ satisfying $q^{h_\al}v=q^{(\mu,\al)}v$ for all $\al\in \Pi^+$.
The set of weights of $V$ is denoted by $\La(V)$.

\section{Base module for quantum  projective space}
\subsection{Description of base module}
The complex projective space $\C P^n$  a real affine variety and can be realized as a set of  Hermitian projectors in  $\End(\C^{n+1})$
of complex rank 1.
It is a real form of the complex variety of projectors on $\C^{n+1}$ with unit trace.
The latter is a homogeneous $GL(n+1)$-variety that is isomorphic to a  conjugacy class passing through a matrix
$t=\diag(x_1,\ldots, x_n)\in GL(n+1)$ with $x_i=x_j$ for $i,j\geqslant 2$.
The centralizer of $t$ is the subgroup $GL(1)\times GL(n)$ whose Lie algebra is $\k$.

By quantization of the affine ring $\C[\Pbb^n]$ we call a $\C[q,q^{-1}]$-algebra $\C_q[\Pbb^n]$, which is a free module when 
extended over the local ring of rational functions in $q$ regular at $q=1$ and which is isomorphic to $\C[\Pbb^n]$ in the classical limit
$q\to 1$. We assume that $\C_q[\Pbb^n]$ is a $U_q(\g)$-module algebra, which structure is a deformation
of the classical structure of $U(\g)$-module algebra on $\C[\Pbb^n]$.

The quantization $\C_q[\Pbb^n]$ does exist and can be realized as a $U_q(\g)$-invariant subalgebra of of linear operators on
a highest weight module $M$, which we call {\em base module} for $\Pbb^n$, \cite{M1}.
Base modules form a 2-parameter family
delivering different quantizations of $\C[\Pbb^n]$. They are in bijection with diagonal matrices whose centralizer
Lie algebra is $\k$.


 The module $M$ has a PBW basis that makes it isomorphic to the vector space of polynomials in $n$ variables.
To describe this basis, let us introduce compound root vectors, cf. \cite{ChP}.
For $\al=\al_i+\ldots+\al_j\in \Rm^+$ with $i<j$ put
$$
e_{\al}=[e_{\al_j},\ldots [e_{\al_{i+1}},e_{\al_i}]_{q}\ldots]_{q}, \quad
f_{\al}=[\ldots[f_{\al_i}, f_{\al_{i+1}}]_{\bar q},\ldots f_{\al_{j}}]_{\bar q}
.
$$
We reserve the notation $\bt_i$ for the weights $\bt_i=\al_1+\ldots +\al_i$, $i=1,\ldots,n$.

Let $\la$ be a weight such that $q^{(\la,\ve_i)}=\sqrt{x_i}$ assuming the same root value for $i>1$. It
defines a 1-dimensional representation of the parabolic subalgebra $\U_q(\p)$ which is zero on $e_\al$ and $f_\al$ and returns
$q^{(\la,\ve_i)}$ on all $q^{h_{\ve_i}}$. Consider the parabolic $U_q(\p)$-module $M$ of highest weight $\la$ induced by the character 
of $U_q(\p)$ with canonical generator $1_\la\in M$. It is a quotient of the ordinary Verma module $\hat M_\la$ by the sum of submodules
generated by $f_\al 1_\la$, $\al \in \Pi^+_\k$.
The module $M$  has a basis of vectors $f_{\bt_1}^{m_1}\ldots f_{\bt_{n}}^{m_{n}}1_\la$, where $m_i$ take all
possible values in $\Z_+$.

Pick up  $s\in \C$ such that $q^{s}= q^{(\la,\al_1)}$ and put $|\vec m|_i=m_i+\ldots + m_n$ for all
$i=1,\ldots, n$.
Clearly $q^{2s}=x_1x_2^{-1}$.
\begin{lemma}
The action of  $U_q(\g)$ on $M$ is given by
\be
e_{\al_1} f_{\bt_1}^{m_1}\ldots f_{\bt_{n}}^{m_{n}}1_\la
&=& [m_1]_q[s-|\vec m|_1+1]_qf_{\bt_1}^{m_1-1}f_{\bt_2}^{m_2}\ldots f_{\bt_{n}}^{m_{n}}1_\la,
\nn\\
e_{\al_i} f_{\bt_1}^{m_1}\ldots f_{\bt_{n}}^{m_{n}}1_\la,
&=& -[m_i]_qq^{- m_i-1-s}f_{\bt_1}^{m_1}\ldots f_{\bt_{i-1}}^{m_{i-1}+1}f_{\bt_i}^{m_i-1} \ldots f_{\bt_{n}}^{m_{n}}1_\la,\quad i>1,
\nn\\
f_{\al_1} f_{\bt_1}^{m_1}\ldots f_{\bt_n}^{m_n}1_\la&=&f_{\bt_1}^{m_1+1} f_{\bt_{2}}^{m_{2}}
\ldots f_{\bt_n}^{m_n}1_\la,
\nn\\
f_{\al_i} f_{\bt_1}^{m_1}\ldots f_{\bt_n}^{m_n}1_\la&=&-q^{-1}[m_{i-1}]_qf_{\bt_1}^{m_1}\ldots f_{\bt_{i-1}}^{m_{i-1}-1} f_{\bt_{i}}^{m_{i}+1}
\ldots f_{\bt_n}^{m_n}1_\la,\quad i>1.
\nn
\ee
\label{action on M}
\end{lemma}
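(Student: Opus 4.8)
The plan is to verify the four displayed formulas directly, by expressing the action of each Chevalley generator on the PBW monomials $f_{\bt_1}^{m_1}\cdots f_{\bt_n}^{m_n}1_\la$ and reducing everything to commutation identities inside $U_q(\g)$ together with the defining relations of the induced module $M$. Recall that $M=\hat M_\la/\sum_{\al\in\Pi^+_\k}U_q(\g)f_\al 1_\la$, so by construction $e_\al 1_\la=0$ for all $\al\in\Pi^+$, $f_\al 1_\la=0$ for $\al\in\Pi^+_\k$, and $q^{h_{\ve_i}}1_\la=q^{(\la,\ve_i)}1_\la$; these are the only module-specific facts needed, everything else is an algebra computation. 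Note $\bt_i=\al_1+\dots+\al_i$, so $f_{\bt_i}$ is the compound root vector of Lemma's preamble and $f_{\al_1}=f_{\bt_1}$.

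First I would record the weight-counting relations. Since $f_{\bt_i}$ has weight $-\bt_i$ and $(\al_j,\bt_i)$ is $1$ if $j=1$, $-1$ if $j=i+1$ (when $i<n$), and $0$ otherwise, one gets $q^{h_{\al_j}}f_{\bt_1}^{m_1}\cdots f_{\bt_n}^{m_n}1_\la = q^{(\la,\al_j)-\sum_i m_i(\al_j,\bt_i)}(\cdots)$; in particular the weight of the monomial is $\la-\sum_i m_i\bt_i$, and $(\la-\sum m_i\bt_i,\al_1)=s-|\vec m|_1\cdot 1 + \text{(correction)}$ — I would carefully compute $(\bt_i,\al_1)=1$ for all $i$ so $(\la-\sum m_i\bt_i,\al_1)=s-|\vec m|_1$, which is exactly the bracket $[s-|\vec m|_1]_q$-type factor that appears. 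The third formula, $f_{\al_1}$ acting as $m_1\mapsto m_1+1$, is immediate from $f_{\al_1}=f_{\bt_1}$. The fourth, the action of $f_{\al_i}$ for $i>1$, comes from the compound-root identity $[f_{\al_i},f_{\bt_{i-1}}]_{?}=f_{\bt_i}$ (up to the appropriate $q$-power from the $[\,,\,]_{\bar q}$ nesting) together with $[f_{\al_i},f_{\bt_j}]=0$ for $j\neq i-1,i$ and $f_{\al_i}1_\la=0$; one then moves $f_{\al_i}$ to the right through the product $f_{\bt_1}^{m_1}\cdots$, picking up a telescoping sum that collapses because $f_{\al_i}$ commutes with $f_{\bt_i}$ as well (both are "$\bar q$-built" from $\al_i$), leaving only the term from the $f_{\bt_{i-1}}^{m_{i-1}}$ block, which by the $q$-Leibniz rule produces the factor $-q^{-1}[m_{i-1}]_q$.

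The two $e_{\al}$ formulas are the substantive ones. For $e_{\al_1}$, I would use $[e_{\al_1},f_{\bt_1}]=[h_{\al_1}]_q$ (since $f_{\bt_1}=f_{\al_1}$) and $[e_{\al_1},f_{\bt_j}]=0$ for $j>1$ (as $(\al_1,\bt_j)$-weight considerations plus the Serre-type vanishing give $[e_{\al_1},f_{\bt_j}]$ proportional to a product of lower generators that annihilates $1_\la$ — more precisely $e_{\al_1}$ commutes with $e_{\al_2},\dots$ hence with $f_{\bt_j}$ for $j\geq 2$ up to terms killed on $1_\la$). Commuting $e_{\al_1}$ to the right through $f_{\bt_1}^{m_1}$ via the $q$-analogue of $[e,f^m]=[m]_q f^{m-1}[h-m+1]_q$-style identity, and evaluating $q^{h_{\al_1}}$ on the remaining monomial and on $1_\la$, yields $[m_1]_q[s-|\vec m|_1+1]_q$. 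For $e_{\al_i}$, $i>1$: here $e_{\al_i}1_\la=0$, and the key commutators are $[e_{\al_i},f_{\bt_{i-1}}]$ and $[e_{\al_i},f_{\bt_i}]$. By the compound-root structure, $[e_{\al_i},f_{\bt_i}]$ is (a $q$-power times) $f_{\bt_{i-1}}q^{-h_{\al_i}}$-type term, while $[e_{\al_i},f_{\bt_{i-1}}]=0$; commuting $e_{\al_i}$ rightward through $f_{\bt_i}^{m_i}$ then replaces one $f_{\bt_i}$ by $f_{\bt_{i-1}}$, producing the shift $m_{i-1}\mapsto m_{i-1}+1$, $m_i\mapsto m_i-1$, with the coefficient $-[m_i]_q q^{-m_i-1-s}$ emerging from the $q$-bracket $[m_i]_q$, the $q^{-h_{\al_i}}$ evaluated against the weight of the trailing monomial and $\la$ (this is where the $-s$ and the $-m_i-1$ exponents come from — the weight pairing $(\la-\sum m_j\bt_j, \al_i)$ contributes the $-s$ after using $q^{2s}=x_1x_2^{-1}$ and the assumption of equal eigenvalues for $i>1$), and a sign from the antipode-compatible normalization of the compound root vector.

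\textbf{Main obstacle.} The delicate point is bookkeeping the $q$-powers in the commutators among compound root vectors — establishing cleanly that $[e_{\al_i},f_{\bt_j}]$ and $[f_{\al_i},f_{\bt_j}]$ vanish for all the "non-adjacent" indices $j$ and computing the exact scalar (including sign and $q$-exponent) in the one nonzero case — and then tracking how the Cartan element that appears in the nonvanishing commutator evaluates once it has been pushed past the remaining $f_{\bt}$'s onto $1_\la$. An efficient route is to first establish, once and for all, the auxiliary identities $[e_{\al_i}, f_{\bt_i}]_{}= c_i\, q^{-h_{\al_i}} f_{\bt_{i-1}}$ (with an explicit constant $c_i$) and $[f_{\al_i},f_{\bt_{i-1}}]_{\bar q}=f_{\bt_i}$ as consequences of the $q$-Serre relations, then feed them into the standard $q$-Leibniz expansion $e\,f^m = \sum_{k} q^{(\cdots)}\binom{m}{k}_q f^{m-k}(\mathrm{ad}\,e)(\cdots)$, which truncates after one term because second commutators vanish. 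Once these two lemmas are in hand, the four formulas follow by a finite, if somewhat tedious, induction on $m_1,\dots,m_n$; no further ideas are required.
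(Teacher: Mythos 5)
Your plan is essentially the paper's own proof, which the author compresses to the two words ``Direct calculation'': one verifies the four formulas by commuting each Chevalley generator through the PBW monomial, using the compound-root commutation identities, the $q$-Leibniz identity $[e_\al,f_\al^m]=[m]_qf_\al^{m-1}[h_\al-m+1]_q$, and the defining relations $e_\al 1_\la=0$, $f_\al 1_\la=0$ for $\al\in\Pi^+_\k$. The only caveats in executing it are bookkeeping slips already latent in your sketch --- e.g.\ $(\bt_1,\al_1)=2$, not $1$, so the bracket $[s-|\vec m|_1+1]_q$ must come from evaluating $h_{\al_1}$ on the tail $f_{\bt_2}^{m_2}\cdots f_{\bt_n}^{m_n}1_\la$ rather than on the full monomial; $f_{\al_i}$ only $q$-commutes with $f_{\bt_i}$; and $[e_{\al_1},f_{\bt_j}]$ for $j\geqslant 2$ is not zero in the algebra but equals an $f_{\al_j}q^{h}$-type term that annihilates the relevant vectors --- all of which fall under the ``main obstacle'' you correctly identify and do not affect the viability of the approach.
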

\vspace{-30pt}
\begin{proof}
  Direct caculation.
\end{proof}
\noindent
Recall that a weight vector in a $U_q(\g)$-module is called singular if it is killed by all $e_\al$, $\al \in \Pi^+$. Singular
vectors generate submodules of highest weight.
\begin{corollary}
   The module $M$ is irreducible if and only if $[s-m]_q\not =0$ for all $m\in \Z_+$.
\end{corollary}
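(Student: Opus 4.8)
The plan is to read off irreducibility directly from the explicit formulas in Lemma \ref{action on M}. First I would observe that $M$, having a basis of weight vectors $f_{\bt_1}^{m_1}\cdots f_{\bt_n}^{m_n}1_\la$ with the weight a function of $\vec m$ only, is graded by $\vec m$ (equivalently by the $n$ integers), and every proper submodule $N$ is weight-graded; since each weight space of $M$ is one-dimensional, a submodule is completely determined by the set $S\subset \Z_+^n$ of basis vectors it contains, and proper means $S\neq \Z_+^n$ and $S$ is closed under the action of all $e_{\al}$ and $f_{\al}$.

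Next I would argue that $M$ fails to be irreducible exactly when there is a proper submodule containing a singular vector below the top, and pin down when such a vector exists. Because the weight spaces are one-dimensional, a basis vector $v=f_{\bt_1}^{m_1}\cdots f_{\bt_n}^{m_n}1_\la$ is singular iff each $e_{\al_i}v=0$. From the second formula in Lemma \ref{action on M}, $e_{\al_i}v=0$ for $i>1$ forces $[m_i]_q=0$, i.e.\ $m_i=0$ for all $i>1$ (here $q$ is not a root of unity, so $[m]_q=0 \iff m=0$). So any singular vector other than $1_\la$ must be of the form $f_{\bt_1}^{m}1_\la$ with $m\geqslant 1$, and the first formula gives $e_{\al_1}f_{\bt_1}^{m}1_\la=[m]_q[s-m+1]_q f_{\bt_1}^{m-1}1_\la$, which vanishes iff $[s-m+1]_q=0$, i.e.\ $q^{2(s-m+1)}=1$. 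Replacing $m-1$ by $m$, this says $[s-m]_q=0$ for some $m\in\Z_+$. Thus a nonzero singular vector below the top exists iff $[s-m]_q=0$ for some $m\in\Z_+$, and its presence produces a nonzero proper submodule (the submodule it generates does not contain $1_\la$, as $1_\la$ is the unique highest weight and the singular vector has strictly lower weight).

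It remains to check the converse: if $[s-m]_q\neq 0$ for all $m\in\Z_+$, then $M$ is irreducible. I would show any nonzero submodule $N$ contains $1_\la$, hence equals $M$. Take $0\neq v\in N$; being weight-graded we may assume $v=f_{\bt_1}^{m_1}\cdots f_{\bt_n}^{m_n}1_\la$. If some $m_i\neq 0$ for $i>1$, applying $e_{\al_i}$ (with the smallest such $i$, say) and using the nonvanishing of $[m_i]_q$ and the scalar $-q^{-m_i-1-s}\neq 0$ decreases $m_i$; iterating drives all $m_i$ with $i>1$ to zero while staying in $N$. Then $v$ is proportional to $f_{\bt_1}^{m_1}1_\la$, and applying $e_{\al_1}$ repeatedly, each time picking up the factor $[m_1]_q[s-|\vec m|_1+1]_q$ with $|\vec m|_1$ the current power — which is nonzero precisely by the hypothesis $[s-m]_q\neq 0$ for all $m\in\Z_+$ together with $[m_1]_q\neq 0$ — we reach a nonzero multiple of $1_\la\in N$. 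Hence $N=M$. The only subtlety, and the one place to be slightly careful, is bookkeeping the running exponent $|\vec m|_1$ so that the arguments $s-|\vec m|_1+1$ that appear range over exactly $\{s-m : m\in \Z_+\}$ as claimed; this is routine given the formulas. Combining the two directions gives the statement.
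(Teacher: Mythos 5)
Your proof is correct and follows essentially the same route as the paper: the paper's one-line argument likewise reads off from Lemma \ref{action on M} that any singular vector below the top must be proportional to $f_{\bt_1}^m1_\la$ and exists precisely when some $[s-m]_q$ vanishes. You merely make explicit the converse direction (driving any nonzero submodule up to $1_\la$ with the $e_{\al_i}$), which the paper leaves to the standard singular-vector criterion for highest weight modules.
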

\begin{proof}
 A singular vector in $M$ appears only when $q^{2s}\in q^{2\N}$ and it is proportional to $f_{\bt_1}^m1_\la$.
\end{proof}
\subsection{Canonical contravariant form on $V\tp M$ and extremal twist}
In this section we apply a criterion for a tensor product $V\tp M$ to be  completely reducible, following \cite{M3}.
Recall that a bilinear form $\langle \>.\>,.\>\rangle$ on a module $V$  is called contravariant with
respect to involution $\omega$ if
for all $v,w\in V$ and all $h\in U_q(\g)$ it satisfies $\langle h v, w\rangle=\langle v,\omega(h)w\rangle$.
It is known that every highest weight module has a unique, up to a scalar multiplier, contravariant form, 
which is non-degenerate if and only if the module is irreducible. It is called Shapovalov form of the highest
weight module.

Suppose that  $V$ is an irreducible  finite-dimensional $U_q(\g)$-module.
Denote by $(V\tp M)^+$ the subspace of singular vectors in $V\tp M$.
Introduce a canonical contravariant bilinear form on $V\tp M$ as the product of Shapovalov forms on $V$ and $M$.
Since $V$ and $M$ are irreducible,  the canonical form is non-degenerate on $V\tp Z$.
\begin{thm}[\cite{M3}]
  The tensor product $V\tp M$ is completely reducible if and only if the canonical form is non-degenerate when restricted
  to $(V\tp M)^+$.
\end{thm}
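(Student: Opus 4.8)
The theorem is proved in \cite{M3}; the plan below reconstructs the argument. Write $N:=V\tp M$. Since $M$ is a quotient of an ordinary Verma module and $V$ is finite-dimensional, $N$ lies in the highest-weight category of $U_q(\h)$-diagonalizable, $U_q(\g_+)$-locally finite $U_q(\g)$-modules with finite-dimensional weight spaces, and it carries a finite filtration whose subquotients are parabolic Verma modules $M_{\p}(\la+\nu)$, $\nu\in\La(V)$. Hence $N$ has finite length, $(V\tp M)^+$ is finite-dimensional, and $N$ is completely reducible if and only if $\mathrm{soc}(N)=N$. The canonical form $\langle\,,\rangle$ is contravariant with respect to $\omega$ (because $\omega$ preserves the comultiplication), and as the tensor product of the non-degenerate Shapovalov forms of the irreducibles $V$ and $M$ it is non-degenerate on $N$; equivalently, the induced map $\phi\colon N\to N^\vee$ into the contravariant dual is an isomorphism. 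Contravariance makes $\langle\,,\rangle$ block-diagonal over weights and turns $R^\perp$ into a submodule for each submodule $R\subset N$.

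For the \emph{necessity} of the condition I would argue as follows: if $N=\bigoplus_i L(\mu_i)$, collect isomorphic summands into isotypic components $N_{[\mu]}$; contravariance forces $\langle N_{[\mu]},N_{[\nu]}\rangle=0$ for $\mu\ne\nu$, so non-degeneracy on $N$ forces it on each $N_{[\mu]}$. By Schur's lemma a contravariant form on $L(\mu)^{\oplus k}$ is encoded by a constant $k\times k$ Gram matrix on the highest-weight lines, non-degenerate exactly when that matrix is; and $(V\tp M)^+=\bigoplus_\mu (N_{[\mu]})^+$ with mutually orthogonal blocks, each carrying that same matrix. Hence the canonical form is non-degenerate on $(V\tp M)^+$.

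For \emph{sufficiency}, set $S:=\mathrm{soc}(N)$. The key reduction is: if $\langle\,,\rangle$ is non-degenerate on $S$, then $S\cap S^\perp=0$ in each weight space, a dimension count gives $N=S\oplus S^\perp$, and since the socle of a submodule sits inside the socle of the whole module, $\mathrm{soc}(S^\perp)\subseteq S\cap S^\perp=0$, forcing $S^\perp=0$ and $N=S$. As $S$ is semisimple, the Schur-matrix argument above shows that $\langle\,,\rangle$ is non-degenerate on $S$ iff it is non-degenerate on $S^+\subseteq (V\tp M)^+$. Everything therefore comes down to showing that non-degeneracy of $\langle\,,\rangle$ on $(V\tp M)^+$ implies its non-degeneracy on the subspace $S^+$. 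I would prove this by identifying $(V\tp M)^+[\mu]$ with $\Hom_{U_q(\g)}(M_{\p}(\mu),N)$ and the ``co-singular'' vectors with $\Hom_{U_q(\g)}(N,M_{\p}(\mu)^\vee)$; the isomorphism $\phi$ matches these spaces, and under the matching the canonical form on $(V\tp M)^+$ becomes the composition pairing $\Hom(N,M_{\p}(\mu)^\vee)\times\Hom(M_{\p}(\mu),N)\to\Hom(M_{\p}(\mu),M_{\p}(\mu)^\vee)\cong\C$. A singular vector spanning a simple submodule corresponds to a map $M_{\p}(\mu)\to N$ lying in the radical of this pairing exactly when that simple submodule fails to split off; invoking the standard-filtration structure of $N$ together with BGG reciprocity, non-degeneracy of the composition pairing forces every simple submodule of $N$ to be a direct summand, i.e. $\langle\,,\rangle$ to be non-degenerate on $S$.

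I expect the genuine obstacle to be exactly this last step — turning non-degeneracy of a finite Gram matrix on $(V\tp M)^+$ into honest direct-sum splittings inside $N$. It cannot be done with the contravariant form alone; it needs the homological structure of the ambient category (existence of standard filtrations, BGG reciprocity), and this is where the explicit analysis of \cite{M3}, organized there around the ``extremal twist'' determinant, is required.
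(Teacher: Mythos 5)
The paper itself gives no proof of this statement; it is imported wholesale from \cite{M3}, so your reconstruction has to be judged on its own merits. Your ``only if'' direction is correct and standard: isotypic components are mutually orthogonal under a contravariant form, and on an isotypic block the form is the Shapovalov form tensored with the Gram matrix of the highest-weight lines. The problem is the ``if'' direction. You reduce it, correctly, to showing that non-degeneracy on $(V\tp M)^+$ forces non-degeneracy on the socle, but at exactly that point you stop proving and instead predict that the step requires standard filtrations and BGG reciprocity. That prediction is mistaken, and since the step you leave open \emph{is} the content of the theorem, the proposal does not establish sufficiency. (A smaller slip: a singular vector of weight $\mu$ gives a morphism from the full Verma module $\hat M_\mu$, not from a parabolic one, so the identification of $(V\tp M)^+[\mu]$ with $\Hom_{U_q(\g)}(M_{\p}(\mu),V\tp M)$ that your sketch relies on is not available in general.)

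The missing idea is a one-line consequence of contravariance. Since $\omega(f_\al)=-q^{-h_\al}e_\al$ has positive weight and annihilates singular vectors, every $u\in(V\tp M)^+$ satisfies $\langle f_\al v,u\rangle=\langle v,\omega(f_\al)u\rangle=0$ for all $v$; hence $(V\tp M)^+\perp\sum_{\al\in\Pi^+}f_\al(V\tp M)$. Now assume the form is non-degenerate on $(V\tp M)^+$ (it is non-degenerate on all of $V\tp M$ because $V$ and $M$ are irreducible). First, each highest-weight submodule $U_q(\g)u=\C u+\sum_\al f_\al U_q(\g_-)u$ with $u\in(V\tp M)^+$ is irreducible: otherwise its maximal proper submodule, which lies in $\sum_\al f_\al(V\tp M)$, contains a nonzero singular vector orthogonal to all of $(V\tp M)^+$, a contradiction. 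Second, the submodule $W$ generated by $(V\tp M)^+$ has $W^\perp=0$: every nonzero submodule contains a nonzero singular vector (weights are bounded above with finite-dimensional weight spaces), and a singular vector in $W^\perp$ is orthogonal to $(V\tp M)^+\subset W$, hence vanishes; non-degeneracy of the form on $V\tp M$ then forces $W=V\tp M$. Thus $V\tp M$ is a sum of irreducible submodules. No homological input is needed; your socle-based frame could be made to work, but only by supplying precisely this orthogonality, which is absent from your proposal.
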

Next we describe a method of computing the determinant of the canonical form on $(V\tp M)^+$ that we use in what follows.
Let $\nu$ denote the highest (positive dominant) weight of $V$  and put $\ell_i=(\nu+\rho,\al_i)-1\in \Z_+$, $i=1,\ldots, n$.
There are two natural parameterizations of $(V\tp M)^+$ with subspaces in $V$ and $M$.
One of them is by the span $V^+_M$ of $U_q(\k)$-singular vectors,
i. e., those annihilated by $e_\al$, $\al \in \Pi_\k^+$ (Gelfant-Zeitlin reduction). 
Then  $\La(V^+_M)=\{\nu-\sum_{i=1}^{n}m_i\bt_i\}$ with $0\leqslant m_i\leqslant \ell_i$
for all $i$, and  all weights are multiplicity-free.

The alternative parametrization of $(V\tp M)^+$ is by a vector subspace $M^+_V\subset M$. 
Define a left ideal $I^+_V\subset U_q(\g_+)$ generated by $\{e_{\al_i}^{\ell_i+1}\}_{i=1}^n$.
Set $M^+_V\subset M$ as the nil-space of $I^+_V$.
The linear isomorphism between $M^+_V$ and $(V\tp M)^+$ acts by the assignment
$(V\tp M)^+\ni u=\sum_i v_i\tp w_i\mapsto \sum_i \langle v_i, 1_\nu\rangle w_i \in M^+_V$, where $1_\nu$ is the highest vector in $V$.

\begin{lemma}
The module $M$ splits into the orthogonal sum $M=M^+_V\op \omega(I^+_V)M$
with
$$
M^+_V =\Span\{ f_{\bt_1}^{m_1}\ldots f_{\bt_{n}}^{m_{n}}1_\la\}_{m_1\leqslant \ell_1, \ldots, m_n\leqslant \ell_n },\quad
\omega(I^+_V)M =\Span\{ f_{\bt_1}^{k_1}\ldots f_{\bt_{n}}^{k_{n}}1_\la\}_{k_1, \ldots, k_n},
$$
where  $k_i>\ell_i$ for some $i=1,\ldots,n$.
\end{lemma}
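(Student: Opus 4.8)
The plan is to work entirely with the explicit PBW basis $\{f_{\bt_1}^{m_1}\ldots f_{\bt_n}^{m_n}1_\la\}$ of $M$ provided above and to verify that the two spans in the statement do what is claimed. I would begin by recording, from Lemma \ref{action on M}, the precise action of a power $e_{\al_i}^{k}$ on a basis vector. For $i>1$ the formula for $e_{\al_i}$ only shifts the pair of exponents $(m_{i-1},m_i)\mapsto(m_{i-1}+1,m_i-1)$ and multiplies by a nonzero scalar times $[m_i]_q$; hence $e_{\al_i}^{k}f_{\bt_1}^{m_1}\ldots 1_\la$ is a nonzero multiple of $f_{\bt_1}^{m_1}\ldots f_{\bt_{i-1}}^{m_{i-1}+k}f_{\bt_i}^{m_i-k}\ldots 1_\la$ precisely when $[m_i]_q[m_i-1]_q\cdots[m_i-k+1]_q\neq 0$, i.e. when $m_i\geqslant k$ (using that $q$ is not a root of unity). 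For $i=1$ the action of $e_{\al_1}$ lowers $m_1$ by one and multiplies by $[m_1]_q[s-|\vec m|_1+1]_q$; so $e_{\al_1}^{k}f_{\bt_1}^{m_1}\ldots 1_\la$ is a nonzero multiple of $f_{\bt_1}^{m_1-k}\ldots 1_\la$ as long as $m_1\geqslant k$ \emph{and} none of the scalars $[s-|\vec m|_1+1]_q,\ldots,[s-|\vec m|_1+k]_q$ vanishes. The role of the hypothesis that $q$ is not an even root of $x_1x_2^{-1}=q^{2s}$, i.e. $[s-m]_q\neq 0$ for $m\in\Z_+$ (the irreducibility condition of the Corollary), is exactly to guarantee these factors are nonzero; I would invoke it here, or alternatively note that the splitting is a statement about $M$ as a graded vector space that can be checked over the generic parameter ring and then specialised.

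Next I would identify $M^+_V$ and $\omega(I^+_V)M$ with the two spans. By definition $\omega(I^+_V)M$ is spanned by $\omega(e_{\al_j}^{\ell_j+1})M$ over $j=1,\ldots,n$; since $\omega=\gm^{-1}\circ\si$ sends $e_{\al_j}$ to $f_{\al_j}$ up to a unit in $U_q(\h)$ which acts invertibly on weight spaces, $\omega(e_{\al_j}^{\ell_j+1})M=f_{\al_j}^{\ell_j+1}M$ as a subspace. Now I compute $f_{\al_j}^{\ell_j+1}$ acting on a basis vector: for $j=1$, $f_{\al_1}$ raises $m_1$ by one, so $f_{\al_1}^{\ell_1+1}f_{\bt_1}^{m_1}\ldots 1_\la=f_{\bt_1}^{m_1+\ell_1+1}\ldots 1_\la$, and these, as $(m_1,\ldots,m_n)$ range over $\Z_+^n$, span exactly the basis vectors with $k_1>\ell_1$. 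For $j>1$, $f_{\al_j}$ shifts $(m_{j-1},m_j)\mapsto(m_{j-1}-1,m_j+1)$ with a nonzero scalar times $[m_{j-1}]_q$; iterating $\ell_j+1$ times raises $m_j$ by $\ell_j+1$ (and lowers $m_{j-1}$ correspondingly), and the resulting vectors span the basis monomials with $k_j>\ell_j$. Taking the sum over $j$ shows $\omega(I^+_V)M$ is spanned by all basis vectors with $k_i>\ell_i$ for at least one $i$, while the complementary monomials, $m_1\leqslant\ell_1,\ldots,m_n\leqslant\ell_n$, span the claimed $M^+_V$. It remains to check that this complementary span is indeed the nil-space of $I^+_V$: a basis vector with all $m_i\leqslant\ell_i$ is killed by each $e_{\al_i}^{\ell_i+1}$ by the vanishing-of-$[m_i]_q$-type argument above (with the $[s-\cdot]_q$ factors harmless by hypothesis), and conversely any vector with some $m_i>\ell_i$ is moved to a nonzero vector by the corresponding $e_{\al_i}^{\ell_i+1}$, so it is not in the nil-space; a short triangularity argument on a general element (order the monomials by the tuple $(m_1,\ldots,m_n)$ and look at the leading term) upgrades this from basis vectors to all of $M$.

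Finally, orthogonality of the decomposition $M=M^+_V\oplus\omega(I^+_V)M$ with respect to the Shapovalov form is formal: for $u\in M^+_V$ and $x\in I^+_V\subset U_q(\g_+)$, contravariance gives $\langle \omega(x)m,u\rangle=\langle m,\omega(\omega(x))u\rangle=\langle m, xu\rangle=0$ since $\omega$ is an involution and $I^+_V$ annihilates $M^+_V$; here one uses that $\omega$ is an algebra anti-automorphism preserving comultiplication, as established in the Preliminaries, so $\langle\cdot,\cdot\rangle$ is $\omega$-contravariant and $\omega\circ\omega=\id$. I expect the main obstacle to be bookkeeping: keeping the scalar coefficients and weight-shift patterns straight when iterating $e_{\al_i}$ and $f_{\al_i}$, and making the triangularity argument precise enough to pass from the spanning statement on basis vectors to the direct-sum statement on all of $M$ — in particular confirming that the two spans are genuinely complementary (their union is the whole basis and they are disjoint), which is immediate from the basis description but should be stated. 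The use of the hypothesis on $q$ (not a root of unity, not an even root of $x_1x_2^{-1}$) to keep the $[s-m]_q$ factors nonzero is the one genuinely non-formal input and I would flag it explicitly.
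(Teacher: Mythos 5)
Your argument is correct and is essentially the paper's own proof, which simply observes that $\omega(I^+_V)M=\sum_{i=1}^{n}f_{\al_i}^{\ell_i+1}M$ and reads both spans off the action formulas of Lemma \ref{action on M}; you have supplied all the details (including the orthogonality via $\omega^2=\id$ and the role of the condition $[s-m]_q\neq 0$) that the paper leaves implicit. One small simplification: the weights of $M$ are multiplicity-free, so each weight space is one-dimensional and the passage from basis vectors to arbitrary elements of the nil-space of $I^+_V$ is immediate, making your triangularity argument unnecessary.
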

\begin{proof}
Observe that $\omega(I^+_V)M=\sum_{i=1}^{n}f_{\al_i}^{\ell_i+1}M$, which proves the right equality. The
left equality follows from Lemma \ref{action on M}.
This implies the  decomposition of $M$.
\end{proof}
Introduce a linear operator $\theta_{M,Z}\in \End(M^+_Z)$ as follows.
Pick up a singular vector  $u\in M\tp V\simeq V\tp M$ and 
present it as $u=\sum_i  w_i \tp f_i1_\nu = w\tp 1_\nu+\ldots $.
Put $\theta_{M,V}(w)$ equal to the image of $\sum_i \gm^{-1}(f_i)   w_i\in M$ under the orthogonal
projection to $M^+_V$.  This operator is independent of the chosen presentation of $u$.
We can regard $\theta_{M,V}$ as a bilinear form  $\langle \theta_{V,Z}(\>.\>),\>.\>\rangle\colon M^+_V\tp M^+_V\to \C$.
\begin{propn}[\cite{M3}]
\label{V-Z-extr}
  The form $\langle \theta_{M,V}(\>.\>),\>.\>\rangle$ is the pullback of the canonical form restrected to $(M\tp V)^+$,
  under the isomorphism $M^+_V\to (M\tp V)^+$.
\end{propn}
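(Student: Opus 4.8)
The plan is to unwind both sides of the asserted identity in terms of the $V$- and $M$-expansions of the two singular vectors, reduce the whole statement to one operator identity on $M$, and prove that identity by induction using only the singular-vector condition and the Hopf-algebra structure maps.

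Fix $w,w'\in M^+_V$ and let $u,u'\in(M\tp V)^+$ be the singular vectors with leading terms $w\tp 1_\nu$ and $w'\tp 1_\nu$ supplied by the isomorphism $M^+_V\to(M\tp V)^+$. Write $u=\sum_i w_i\tp f_i1_\nu$ and $u'=\sum_j w'_j\tp f'_j1_\nu$ in terms of a weight basis $\{f_i1_\nu\}$ of $V$ with $f_0=1$, $w_0=w$, and similarly $f'_0=1$, $w'_0=w'$. Because the canonical form is the product of the Shapovalov forms of $V$ and of $M$, expanding bilinearly gives
\[
\langle u,u'\rangle=\sum_i\big\langle w_i,\ \tilde w_i\big\rangle_M,\qquad
\tilde w_i:=\sum_j\langle f_i1_\nu,f'_j1_\nu\rangle_V\,w'_j\in M.
\]
On the other side, since $w'\in M^+_V$ and the decomposition $M=M^+_V\op\omega(I^+_V)M$ of the preceding Lemma is orthogonal, the orthogonal projection onto $M^+_V$ in the definition of $\theta_{M,V}$ may be dropped when pairing against $w'$; combining this with contravariance of the Shapovalov form of $M$ under $\omega$ and with the fact that $\omega\circ\gm^{-1}$ and $\si$ agree on the generators of $U_q(\g)$, hence coincide (both being algebra automorphisms), one gets
\[
\langle\theta_{M,V}(w),w'\rangle_M=\sum_i\big\langle\gm^{-1}(f_i)w_i,\ w'\big\rangle_M=\sum_i\big\langle w_i,\ \si(f_i)\,w'\big\rangle_M.
\]
Thus the Proposition follows once we show $\tilde w_i=\si(f_i)w'$ for all $i$; in fact it suffices to prove that for every $f\in U_q(\g_-)$,
\[
\tilde w^{(f)}:=\sum_j\langle f1_\nu,f'_j1_\nu\rangle_V\,w'_j=\si(f)\,w'.
\]

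I would prove this by induction on the height of the weight of $f$. When $f$ has weight zero only $j=0$ contributes and both sides equal $f w'$. For the inductive step write $f=\sum_\al f_\al g_\al$ with $\al$ simple and $g_\al\in U_q(\g_-)$ of strictly smaller height, which is possible because every positive-length element of $U_q(\g_-)$ lies in $\sum_\al f_\al U_q(\g_-)$; by linearity it is enough to treat $f=f_\al g$. Moving $f_\al$ across the $V$-pairing by contravariance, with $\omega(f_\al)=-q^{-h_\al}e_\al$, and then applying $\id_M\tp\langle g1_\nu,\cdot\,\rangle_V$ to the singular relation $\Delta(e_\al)u'=0$ — recalling $\Delta(e_\al)=e_\al\tp q^{h_\al}+1\tp e_\al$ — one sees that the two $q$-powers coming from $q^{\mp h_\al}$ acting on the weight vector $g1_\nu$ are mutually inverse and cancel, leaving $\tilde w^{(f_\al g)}=e_\al\,\tilde w^{(g)}$. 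With the inductive hypothesis $\tilde w^{(g)}=\si(g)w'$ and $\si(f_\al)=e_\al$ this gives $\tilde w^{(f)}=\sum_\al e_\al\si(g_\al)w'=\si(f)w'$, closing the induction.

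The main obstacle is the bookkeeping in the inductive step: one must check carefully that the scalar produced by $\omega(q^{-h_\al})=q^{-h_\al}$ acting on $g1_\nu$ inside the $V$-pairing is precisely the inverse of the one produced by $q^{h_\al}$ in the first leg of $\Delta(e_\al)$, and that the decomposition $f=\sum_\al f_\al g_\al$ is used only modulo the annihilator of $1_\nu$, so that the recursion genuinely lives on $V$. Everything else is formal manipulation with the Shapovalov forms. As a consistency check, the identity $\tilde w^{(f)}=\si(f)w'$, and hence the Proposition, can be verified fully explicitly on the base module using the closed action formulas of Lemma \ref{action on M}.
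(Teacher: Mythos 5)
This proposition is imported from \cite{M3} and the paper gives no proof of its own, so there is nothing internal to compare against; your argument is correct as a self-contained verification. The reduction to the identity $\sum_j\langle f1_\nu,f'_j1_\nu\rangle_V\,w'_j=\si(f)w'$, its proof by induction via the singular-vector relation $(1\tp e_\al)u'=-(e_\al\tp q^{h_\al})u'$, and the two supporting observations you rely on --- that $\omega\circ\gm^{-1}=\si$ (both are automorphisms agreeing on generators) and that the orthogonal projection in the definition of $\theta_{M,V}$ can be dropped against $w'\in M^+_V$ by the orthogonality of $M=M^+_V\op\omega(I^+_V)M$ --- all check out, and this is essentially the standard argument from the cited source.
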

Under certain conditions, see \cite{M3}, Proposition 7.1, the operator $\theta_{M,V}$ can be obtained from
a lift $\Fc\in U_q(\g_+)\hat \tp U_q(\g_-)$ of the inverse invariant pairing between an irreducible Verma module
$\hat M_\mu$ (of highest weight $\mu$) and its right dual ${}^*\!\hat M_\mu$ (the opposite Verma module of lowest weight $-\mu$).
Such a lift is the image of $1\in \C$ under the composition of maps
$$\C\to {}^*\!\hat M_\mu\hat \tp \hat M_\mu \to U_q(\g_+)\hat \tp U_q(\g_-)\quad \mbox{(a completed tensor products)},$$
where the right arrow is a $\C$-linear section of the natural homomorphism of $U_q(\g_+) \tp U_q(\g_-)$-modules
(the left arrow is the coevaluation map). 
Using the Sweedler notation  $\Fc_1\tp \Fc_2$  for $\Fc$ we set $\Upsilon_\mu=\gm^{-1}(\Fc_2)\Fc_1\in U_q(\g)$ (a certain extension
of the quantum group assumed).
The module $V$ is a quotient of $\hat M_\nu$, therefore the lift $\Fc$ has poles at $\mu=\nu$. However, its part that is relevant to $M^+_V$ is well-defined if the following hypothesis holds true.
\begin{propn}[\cite{M3}]
\label{lift_of_twist}
  Suppose that every singular vector in $V\tp M$ is the image of a singular vector in $\hat M_\mu\tp M$
  under the natural homomorphism $\hat M_\mu \tp M\to V\tp M$.
  Then $\langle \theta_{M,V}(\>.\>)\>, \>.\> \rangle =
\lim_{\mu\to \nu}\langle \Upsilon_\mu(\>.\>)\>, \>.\> \rangle$ on $M^+_V$.
\end{propn}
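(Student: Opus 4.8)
The plan is to compute the extremal twist first for the irreducible Verma module $\hat M_\mu$ with $\mu$ in general position, to identify the resulting operator with $\Upsilon_\mu$, and then to let $\mu\to\nu$; the hypothesis is precisely what makes this specialisation harmless on $M^+_V$. For $\mu$ in general position $\hat M_\mu$ is irreducible, so its Shapovalov form is non-degenerate, and the constructions of \cite{M3} recalled above, which use only the non-degeneracy of the Shapovalov form, apply to $\hat M_\mu$ in place of $V$. In that case the ideal $I^+_{\hat M_\mu}$ is trivial, hence $M^+_{\hat M_\mu}=M$, the orthogonal projection onto $M^+_{\hat M_\mu}$ is the identity, and the leading-term assignment $(M\tp\hat M_\mu)^+\to M$, $u=w\tp 1_\mu+\ldots\mapsto w$, is a linear isomorphism, as is standard for the tensor product with a Verma module of generic highest weight. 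Thus for every $w\in M$ there is a unique singular vector $\tilde u_w\in M\tp\hat M_\mu$ with leading term $w\tp 1_\mu$.

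The first key step is to exhibit $\tilde u_w$ through $\Fc$: I claim that $\tilde u_w=(\Fc_1\tp\Fc_2)(w\tp 1_\mu)$, i.e. $\Fc$ acting with $\Fc_1$ on the first factor $M$ and with $\Fc_2$ on $\hat M_\mu$. Since $U_q(\g_+)$ acts locally finitely on the highest weight module $M$, only finitely many summands act non-trivially, so $\tilde u_w$ is a well-defined element of $M\tp\hat M_\mu$; its $1_\mu$-component is $w$, coming from the unit summand of $\Fc$; and it is singular. The last point is where the definition of $\Fc$ enters: applying $e_\al$ by means of $\Delta(e_\al)=e_\al\tp q^{h_\al}+1\tp e_\al$ and regrouping, the relation $e_\al\tilde u_w=0$ reformulates the $U_q(\g)$-invariance of the coevaluation $\C\to{}^*\!\hat M_\mu\hat\tp\hat M_\mu$ that defines $\Fc$, equivalently the fact that $\{\Fc_2\,1_\mu\}$ is the basis of $\hat M_\mu$ dual, under the invariant pairing, to the basis $\{\Fc_1\cdot m_{-\mu}\}$ of ${}^*\!\hat M_\mu$, $m_{-\mu}$ the lowest vector; one verifies this directly (in rank one it is the recursion for the pairing values $\langle f_\al^{\,j}1_\mu,e_\al^{\,j}m_{-\mu}\rangle$ forced by $e_\al\tilde u_w=0$). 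Reading $\tilde u_w=\sum_i w_i\tp f_i1_\mu$ off this presentation, with $w_i=\Fc_1w$ and $f_i=\Fc_2$, the recipe for $\theta$ from the paragraph preceding Proposition \ref{V-Z-extr} gives $\sum_i\gm^{-1}(f_i)w_i=\gm^{-1}(\Fc_2)\Fc_1 w=\Upsilon_\mu w$; as the projection onto $M^+_{\hat M_\mu}=M$ is the identity, $\theta_{M,\hat M_\mu}=\Upsilon_\mu$, that is $\langle\theta_{M,\hat M_\mu}(\cdot),\cdot\rangle=\langle\Upsilon_\mu(\cdot),\cdot\rangle$ on $M$.

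It remains to specialise $\mu\to\nu$. At $\mu=\nu$ the Verma module $\hat M_\nu$ surjects onto $V$ with kernel $\sum_i U_q(\g_-)f_{\al_i}^{\ell_i+1}1_\nu$, the ideal $I^+_V=\sum_i U_q(\g_+)e_{\al_i}^{\ell_i+1}$ becomes non-trivial, and $\Fc(\mu)$ acquires poles. The hypothesis says that every singular vector of $V\tp M\cong M\tp V$ is the image of a singular vector of $M\tp\hat M_\nu$; together with the parameterization of $(M\tp V)^+$ by $M^+_V$ via leading terms, this forces the family $\tilde u_w=\Fc(w\tp 1_\mu)$, for $w\in M^+_V$, to stay regular at $\mu=\nu$, with limit the singular vector of $M\tp V$ attached to $w$. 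Indeed, a polar part of $\tilde u_w$ would, after the pole is cleared, be a singular vector of $M\tp\hat M_\nu$ with vanishing leading term, hence would map in $M\tp V$ to a singular vector with vanishing leading term, which is zero; it would thereby leave the singular vector $u_w$ of $M\tp V$ without a singular lift, contradicting the hypothesis. Consequently $\lim_{\mu\to\nu}\Upsilon_\mu w$ exists modulo $\omega(I^+_V)M$ for $w\in M^+_V$, and by Proposition \ref{V-Z-extr} the pairing of this limit with $M^+_V$ is the canonical form on $(M\tp V)^+$, i.e. $\theta_{M,V}$. Finally, since $M^+_V$ is orthogonal to $\omega(I^+_V)M$ in the decomposition $M=M^+_V\oplus\omega(I^+_V)M$, pairing with a vector of $M^+_V$ absorbs the orthogonal projection onto $M^+_V$, whence $\langle\theta_{M,V}(\cdot),\cdot\rangle=\lim_{\mu\to\nu}\langle\Upsilon_\mu(\cdot),\cdot\rangle$ on $M^+_V$.

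I expect the main obstacle to be the last paragraph: making rigorous that the hypothesis forces the poles of $\Fc(\mu)$ to be absent on $M^+_V$, in effect that the polar contributions relevant there all arise from $\Fc_1\in I^+_V$, which annihilates $M^+_V$, the remaining poles being excluded from interfering precisely by the liftability hypothesis. On the technical side, the other delicate point is the singularity of $\tilde u_w=(\Fc_1\tp\Fc_2)(w\tp 1_\mu)$ in the second paragraph, which depends on fixing once and for all the duality and comultiplication conventions under which $\Fc$ is the lift of the inverse invariant pairing.
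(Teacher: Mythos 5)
The paper does not actually prove this proposition: it is imported from \cite{M3} (the ``certain conditions'' of Proposition 7.1 there), so there is no in-paper argument to compare yours against and I can only judge the proposal on its own terms. Your first two paragraphs are sound and consistent with the paper's conventions: for generic $\mu$ the singular vector of $M\tp\hat M_\mu$ with leading term $w\tp 1_\mu$ is $(\Fc_1\tp\Fc_2)(w\tp 1_\mu)$, its singularity amounting to the invariance of the canonical element of the pairing, and the recipe defining $\theta$ then returns $\gm^{-1}(\Fc_2)\Fc_1 w=\Upsilon_\mu w$ with no projection needed since $M^+_{\hat M_\mu}=M$. This is the right identification and the natural strategy for the statement.

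The genuine gap is in the third paragraph, exactly where you flag it. From ``a polar part of $\tilde u_w$ would, after clearing the pole, be a singular vector of $M\tp\hat M_\nu$ with vanishing leading term'' you conclude that $u_w$ is ``left without a singular lift, contradicting the hypothesis.'' That does not follow. Singular vectors of $M\tp\hat M_\nu$ with vanishing leading term genuinely exist once $\hat M_\nu$ is reducible (for instance, vectors supported on $M\tp\sum_i U_q(\g_-)f_{\al_i}^{\ell_i+1}1_\nu$), so producing one is not absurd; and the hypothesis asserts only that \emph{some} singular lift of $u_w$ exists, not that the lift is $\lim_{\mu\to\nu}\tilde u_w(\mu)$. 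Since $\dim (M\tp\hat M_\nu)^+$ in the relevant weight may exceed its generic value, a pole in the family $\tilde u_w(\mu)$ is a priori compatible with the hypothesis: the guaranteed lift could live in the ``extra'' singular vectors that appear only at $\mu=\nu$. To close the argument one needs a specialization count --- e.g. that the saturated lattice spanned by $\{\tilde u_w(\mu)\}_{w\in M^+_V}$ specializes at $\mu=\nu$ to a subspace still surjecting onto $(M\tp V)^+$, which together with the injectivity of the leading-term map on $(M\tp V)^+$ would force each $\tilde u_w(\mu)$, $w\in M^+_V$, to be regular with leading term $w$ --- equivalently, that the polar part of $\Fc_1$ falls into $I^+_V$ modulo terms annihilating $M^+_V$. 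As written, the contradiction is not reached, so the existence of $\lim_{\mu\to\nu}\langle\Upsilon_\mu(\>.\>),\>.\>\rangle$ on $M^+_V$ is not established.
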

\subsection{Complete reduciblility of $V\tp M$}
The inverse of the operator $\Upsilon_\mu$ is found in \cite{EV}, in connection with the dynamical
Weyl group.
Their result can be used for calculation of $\theta_{M,V}$ because the conditions of Proposition \ref{lift_of_twist}
are fulfilled, as proved below.
\begin{propn}
\label{sing_cover}
  All singular vectors in $V\tp M$ are images of singular vectors in $\hat M_\nu \tp M$.
\end{propn}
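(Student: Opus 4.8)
The plan is to realize $V$ as the quotient $\hat M_\nu/N$ of the ordinary Verma module $\hat M_\nu$ by its maximal proper submodule $N$, and to prove that the natural projection $\pi\colon\hat M_\nu\tp M\to V\tp M$ restricts to a surjection $(\hat M_\nu\tp M)^+\twoheadrightarrow(V\tp M)^+$ --- which is precisely the assertion. Recall from the preceding discussion that $(V\tp M)^+$ is identified with $M^+_V$ via $u=\sum_i v_i\tp w_i\mapsto\sum_i\langle v_i,1_\nu\rangle w_i$, and that $M^+_V$ is the nil-space of the left ideal $I^+_V\subset U_q(\g_+)$ generated by $e_{\al_i}^{\ell_i+1}$, $i=1,\dots,n$. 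Since the contravariant form on $V$ is induced from the Shapovalov form of $\hat M_\nu$, this identification intertwines $\pi|_+$ with the leading-coefficient map $(\hat M_\nu\tp M)^+\to M$, $\tilde u=\sum_j\tilde v_j\tp w_j\mapsto\sum_j\langle\tilde v_j,1_\nu\rangle w_j$, i.e. with ``extract the $M$-coefficient of $1_\nu\tp(\cdot)$''. So it is enough to show that every $w_0\in M^+_V$ occurs as such a coefficient of a genuine singular vector of $\hat M_\nu\tp M$.

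I would construct that singular vector by the standard extremal-vector recursion. Writing it as $1_\nu\tp w_0+\sum_{|I|>0}f_I1_\nu\tp w_I$ over a PBW basis $\{f_I\}$ of $U_q(\g_-)$, the equations $e_\al(\,\cdot\,)=0$, $\al\in\Pi^+$, determine the weight vectors $w_I\in M$ by induction on $|I|$. At each step one inverts a product of $q$-integers $[k]_q$ with $k\geqslant1$ --- nonzero because $q$ is not a root of unity --- together with Shapovalov-type norms of $M$ of the form $[s-m]_q$, $m\in\Z_+$ --- nonzero because $M$ is irreducible (equivalently, $q$ is not an even root of $x_1x_2^{-1}$) --- \emph{except} at the finitely many steps which cross one of the singular vectors $f_{\al_i}^{\ell_i+1}1_\nu$ generating $N$: there the relevant $q$-integer vanishes, and the corresponding equation degenerates to $e_{\al_i}^{\ell_i+1}\bigl(\mbox{a nonzero multiple of }w_0\bigr)=0$, which holds exactly because $w_0\in M^+_V$; past those points the remaining coefficients can be chosen consistently, the residual freedom accounting for $(N\tp M)^+$. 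The recursion terminates, since the $M$-weights occurring in a fixed $U_q(\g)$-weight space of $\hat M_\nu\tp M$ are bounded above by $\la$. Carrying this out rests on the explicit action of Lemma \ref{action on M}.

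Granting the recursion, the leading-coefficient map is onto $M^+_V$, hence $\pi|_+$ is onto $(V\tp M)^+$, which proves the Proposition. The hard part is the second paragraph: verifying in full rank that the \emph{only} obstructions to continuing the recursion are the defining relations of $M^+_V$, and that the scalars encountered along the way are precisely the $q$-integers and the norms $[s-m]_q$ named above. This is the single genuine computation, and it is exactly where the standing hypotheses on $q$ enter; the formulas of Lemma \ref{action on M} reduce it to a manageable book-keeping over the PBW basis of $M$. (An alternative, more structural route would be to invoke the tensor identity $V\tp M\cong U_q(\g)\tp_{U_q(\p)}(\mathrm{Res}_{U_q(\p)}V\tp\C_\la)$, compatibly with the analogous identity for $\hat M_\nu$, thereby reducing the singular-vector condition to a rank-one problem in the $\al_1$-direction; but the denominator analysis one must then perform is the same.)
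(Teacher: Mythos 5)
Your reduction is sound: since the map $(V\tp M)^+\to M^+_V$, $u=\sum_i v_i\tp w_i\mapsto\sum_i\langle v_i,1_\nu\rangle w_i$, is a bijection and $(N\tp M)^+$ (where $N\subset\hat M_\nu$ is the maximal submodule) lies in the kernel of the corresponding map on $(\hat M_\nu\tp M)^+$, the Proposition is indeed equivalent to every $w_0\in M^+_V$ arising as the leading coefficient of a singular vector $1_\nu\tp w_0+\cdots$ in $\hat M_\nu\tp M$. But the construction of that singular vector is exactly where your argument stops being a proof. Expanding over a PBW basis of the \emph{first} factor, the equations $e_{\al}u=0$, $\al\in\Pi^+$, give at each level an overdetermined linear system for the vectors $w_I$ whose coefficient matrix is governed by the Shapovalov form of $\hat M_\nu$ --- not by the norms $[s-m]_q$ of $M$, which do not enter this recursion at all (they would appear only if you expanded with leading term $v\tp 1_\la$ in the \emph{second} factor). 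That form degenerates on all of $N$, with vanishing denominators $[(\nu+\rho,\al)-k]_q$ occurring for non-simple positive roots as well, so at each degenerate level one must check both consistency (the inhomogeneous term lies in the image of the degenerate matrix) and that the accumulated obstructions on $w_0$ amount to exactly $e_{\al_i}^{\ell_i+1}w_0=0$ and nothing more. You flag this as ``the single genuine computation'' and do not perform it; but that computation \emph{is} the Proposition --- everything preceding it is a reformulation, and in rank $n>1$ the consistency of the overdetermined system is the entire difficulty.

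The paper's proof avoids the recursion altogether by using the other parametrization of $(V\tp M)^+$, namely by $V^+_M$, the span of $U_q(\k)$-singular vectors of $V$. The Gelfand-Zeitlin basis shows that each such $v$ equals $f(\nu)1_\nu$ with $f(\nu)\in U_q(\g_-)$ lying in the normalizer of the left ideal $U_q(\g)I^+_M$ (the quantum Mickelsson algebra of the pair $(\g,\k)$), hence $v$ lifts to a $U_q(\k)$-singular vector $\hat v\in\hat M_\nu$. Applying the lift $\Fc_M$ of the inverse invariant pairing of the \emph{irreducible} module $M$ --- which is pole-free precisely because $[s-m]_q\neq0$ --- to $\hat v\tp1_\la$ and to $v\tp1_\la$ yields compatible singular vectors in $\hat M_\nu\tp M$ and $V\tp M$. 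To salvage your route you would need a comparable structural input (an extremal projector with controlled denominators on $1_\nu\tp M^+_V$, or the normalizer property above) in place of the unproved claim about the recursion.
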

\begin{proof}
 Thanks to the presence of the Gelfand-Zeitlin basis in $V$ (see e. g., \cite{Mol} for the classical group version),
  every element of $v\in V^+_M$ can be presented as $v=f(\nu)1_\nu$, where $f(\nu)\in U_q(\g_-)$ belongs to the normalizer
  of the left ideal $U_q(\g)I^+_M$ in $U_q(\g)$ (the quotient of the normalizer by $U_q(\g)I^+_M$ is the quantum Mickelsson algebra of the pair $(\g,\k)$, cf. \cite{KO}). Therefore $v$ is the image of
  an element $\hat v\in (\hat M_\nu)^+_M\simeq (\hat M_\nu\tp M)^+$. The corresponding singular vectors
  in $\hat M_\nu\tp M$ and $V\tp M$ are, respectively,  $\hat u=\Fc_M(\hat v\tp 1_\la)$ and $ u=\Fc_M(v\tp 1_\la)$,
  where $\Fc_M\in U_q(\g_+)\hat \tp U_q(\g_-)$ is a lift of the inverse invariant pairing between $M$ and ${}^*\!M$. 
  Then $u$ is the image of $\hat u$.
\end{proof}

Since the weights in $M$ are multiplicity free, we can write, following \cite{EV},
$$\Upsilon_\nu( w)\propto \prod_{\al\in \Rm_\g^+}\prod_{k=1}^{l_{\xi,\al}}\frac{[(\nu+\rho+\xi,\al)+k]_{q}}{[(\nu+\rho,\al)-k]_{q}}w,
\quad w\in M^+_V[\xi],
$$
where $l_{\xi,\al}=\max\{l \in \Z:e_\al^lw\not =0 \}$.
The operator $\Upsilon_{\nu}$ preserves $M^+_V$, thus $\theta_{M,V}=\Upsilon_\nu$ on $M^+_V$.

\begin{propn}
 The operators $\theta_{M,V}$ are invertible  for all $V$ {\em iff} $q^{2s}\not \in  q^{2\Z}$.
 \label{proj_extr_tw}
\end{propn}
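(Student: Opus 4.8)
The plan is to compute the scalar by which $\theta_{M,V}$ acts on each weight line of $M^+_V$ using the explicit product formula quoted from \cite{EV} just above, and then to determine exactly when any factor in that product vanishes. Recall that $M^+_V$ has the weight decomposition $\La(M^+_V)=\{\la-\sum_{i=1}^n m_i\bt_i\}$ with $0\leqslant m_i\leqslant \ell_i$, every weight multiplicity-free, where $\ell_i=(\nu+\rho,\al_i)-1$. On the line $M^+_V[\xi]$ with $\xi=\la-\sum m_i\bt_i$, the operator $\theta_{M,V}=\Upsilon_\nu$ acts as $\prod_{\al\in\Rm^+}\prod_{k=1}^{l_{\xi,\al}}\frac{[(\nu+\rho+\xi,\al)+k]_q}{[(\nu+\rho,\al)-k]_q}$. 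The denominators are nonzero because $1\leqslant k\leqslant l_{\xi,\al}\leqslant\ell_i$ forces $(\nu+\rho,\al)-k$ to be a positive integer, and $q$ is not a root of unity; so invertibility is governed entirely by the numerators $[(\nu+\rho+\xi,\al)+k]_q$.

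First I would reduce the numerator condition. Since $[m]_q=0$ (for $q$ not a root of unity) iff $m=0$, the factor $[(\nu+\rho+\xi,\al)+k]_q$ vanishes iff $(\nu+\rho+\xi,\al)+k=0$, i.e. $(\nu+\rho+\xi,\al)=-k$ for some $k$ in the relevant range. I would substitute $\xi=\la-\sum_j m_j\bt_j$ and work out $(\nu+\rho+\xi,\al)$ for $\al=\ve_a-\ve_b$, $a<b$. The contributions from $\nu+\rho$ are nonnegative integers; the contribution from $\la$ is where the parameter $s$ enters, via $q^{(\la,\al_1)}=q^s$ (equivalently $q^{2s}=x_1x_2^{-1}$), while $(\la,\al_i)=0$ for $i>1$ because of the equal eigenvalues. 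Using $(\bt_j,\ve_a-\ve_b)$, which is $1$ if $a\leqslant j<b$, $-1$ if $b\leqslant j$ is impossible here with $a<b\leqslant j$… I would just tabulate: the only roots $\al$ for which the pairing $(\la,\al)$ is nonzero modulo integers are those of the form $\ve_1-\ve_b$, and there $(\la,\ve_1-\ve_b)=s$. Hence a numerator can vanish only along roots $\al=\ve_1-\ve_b$, and the vanishing condition becomes $s+(\text{integer})=-k$, i.e. $s\in\Z$, equivalently $q^{2s}\in q^{2\Z}$.

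The two directions then come out as follows. If $q^{2s}\notin q^{2\Z}$, then $s\notin\frac12\Z$ is irrelevant — what matters is $s\notin\Z$, so $(\nu+\rho+\xi,\al)+k$ is never zero (it is either a nonzero integer along roots not meeting $\ve_1$, or an integer plus the non-integer $s$ along roots $\ve_1-\ve_b$), every factor of every scalar is nonzero, and $\theta_{M,V}$ is invertible on each weight line, hence on all of $M^+_V$, for every $V$. Conversely, if $q^{2s}\in q^{2\Z}$, i.e. $s\in\Z$, I would exhibit a single $V$ and a weight line on which a numerator vanishes: take $\nu$ so that $\ell_1$ is large enough that the range $1\leqslant k\leqslant l_{\xi,\al}$ for $\al=\ve_1-\ve_2$ and a suitable choice of $m_1$ (with $m_2=\dots=0$) hits the value $k=-(\nu+\rho+\xi,\ve_1-\ve_2)$; concretely, since $(\la,\ve_1-\ve_2)=s$ is a (possibly negative) integer, one can choose $m_1\leqslant\ell_1$ so that $(\nu+\rho+\xi,\ve_1-\ve_2)$ is a nonpositive integer of the right size, making $[(\nu+\rho+\xi,\al)+k]_q=0$ for some admissible $k$. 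Then $\theta_{M,V}$ is singular, so it is not invertible for all $V$. I expect the main obstacle to be purely bookkeeping: pinning down $l_{\xi,\al}=\max\{l:e_\al^l w\neq 0\}$ for the compound root vectors $e_\al$ acting on the PBW basis of $M$, which requires iterating the formulas of Lemma \ref{action on M} and checking that the range of $k$ in the converse direction genuinely contains the bad value for an explicit $\nu$; everything else is a direct evaluation of inner products in $\h^*$.
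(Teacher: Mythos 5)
Your proposal follows essentially the same route as the paper's proof: it evaluates $\theta_{M,V}=\Upsilon_\nu$ on each multiplicity-free weight line via the Etingof--Varchenko product formula, checks that the denominators and the numerator factors attached to roots of $\k$ are $q$-brackets of nonzero integers, isolates the roots $\ve_1-\ve_b$ as the only ones where $s$ enters (via $(\la,\ve_1-\ve_b)=s$), and for the converse produces a vanishing factor $\phi_{\xi,\bt_1,k}$ by taking $\ell_1$ large with $m_2=\dots=m_n=0$. The one step you explicitly defer---computing $l_{\xi,\al}=m_j$ for $\al=\ve_i-\ve_{j+1}$ from Lemma \ref{action on M}, which fixes the range of $k$ needed to hit the bad value in the converse---is precisely the bookkeeping the paper supplies, so the argument is complete once that is filled in.
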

\begin{proof}
Let $\xi$ be a weight from $\La(M^+_V)$, that is, $\xi=\la-\sum_{i=1}^{n}m_i\bt_i$ with $0\leqslant m_i\leqslant \ell_i=(\nu+\rho,\al_i)-1$.
It is easy to check, using Lemma \ref{action on M}, that for  $\al=\ve_i-\ve_{j+1}$, $i\leqslant j+1$,
the integer $l_{\xi,\al}$ equals $m_j$.
Then the factor $[(\nu+\rho,\al)-k]_{q}$  equals $[m_i+\ldots+m_j+(j-i+1)-k]_{q}\not =0$, as $q$ is not a root of unity. Therefore the denominator of $\theta_{M,V}$ does not vanish.

Define $\phi_{\xi,\al,k}=[(\nu+\rho+\xi,\al)+k]_{q}$ for all $\xi\in \La(M^+_V)$, $\al\in \Rm^+$, and $k\in \N$.
Then $\det \theta_{M,V}$ is equal to the product
$
\prod_{\xi\in \La(M^+_V)}\prod_{\al\in \Rm_\g^+}\prod_{k=1}^{l_{\xi,\al}}\phi_{\xi,\al,k}
$,
up to a non-zero factor.

If $\al\in \Pi^+_\k$, then $(\nu+\rho+\xi,\al)+k\in \Z\backslash \{0\}$, and $\phi_{\xi,\al,k}\not =0$  since $q$ is not a root of unity.
Suppose that $\al =\bt_j$. Then $\phi_{\xi,\bt_j,k}=[(\nu+\rho,\bt_j)-\sum_{i=1}^{n}m_i(\bt_i,\bt_j)+k+s]_q$.
The first three terms are obviously integer, therefore $\det \theta_{M,V}\not =0$ if $s$ satisfies the hypothesis.

Conversely,  suppose that $[s-m]_q=0$ for some $m\in \Z$ and show that $\det \theta_{M,V}=0$ if $\ell_1$ is
sufficiently large.  Put $m_i=0$ for $i>1$, then $1+\ell_1-2m_1 +k$ with $m_1\in [0,\ell_1]$ and $k\in [1,m_1]$ takes all integer values in
the interval $[2-\ell_1,\ell_1]$. Therefore, one of the factors $\phi_{\xi,\bt_1,k}$ vanishes once $|m-1|\leqslant \ell_1-1$.
\end{proof}
\begin{corollary}
Under the assumptions of Proposition \ref{proj_extr_tw}, the modules $V\tp M$ are completely reducible for all finite-dimensional
$V$.
\end{corollary}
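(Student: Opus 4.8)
The plan is to deduce the corollary directly from Theorem~3.9 (the completeness criterion of \cite{M3}) combined with Proposition~\ref{proj_extr_tw}. Recall that Theorem~3.9 asserts that $V\tp M$ is completely reducible precisely when the canonical contravariant form on $(V\tp M)^+$ is non-degenerate, and Proposition~\ref{V-Z-extr} identifies this restricted form, up to the isomorphism $M^+_V\to (M\tp V)^+$, with the bilinear form $\langle\theta_{M,V}(\>.\>),\>.\>\rangle$ on $M^+_V$. Since $M$ is irreducible under the standing hypothesis $q^{2s}\notin q^{2\Z}$ (by the Corollary following Lemma~\ref{action on M}, as $[s-m]_q\neq 0$ for all $m$), the Shapovalov form on $M$ is non-degenerate, hence so is its restriction to the finite-dimensional subspace $M^+_V$. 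Therefore the non-degeneracy of $\langle\theta_{M,V}(\>.\>),\>.\>\rangle$ is equivalent to the invertibility of the operator $\theta_{M,V}\in\End(M^+_V)$.

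First I would verify that Proposition~\ref{proj_extr_tw} applies: its hypothesis $q^{2s}\notin q^{2\Z}$ is exactly the assumption carried over into the statement of the corollary, so that proposition yields invertibility of $\theta_{M,V}$ for every finite-dimensional $V$. Next I would chain the equivalences: $\theta_{M,V}$ invertible $\Longrightarrow$ the form $\langle\theta_{M,V}(\>.\>),\>.\>\rangle$ is non-degenerate on $M^+_V$ $\Longrightarrow$ (via Proposition~\ref{V-Z-extr}) the canonical form is non-degenerate on $(M\tp V)^+\simeq (V\tp M)^+$ $\Longrightarrow$ (via Theorem~3.9) $V\tp M$ is completely reducible. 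Running this over all finite-dimensional irreducible $V$ gives the claim for all irreducible $V$, and then for arbitrary finite-dimensional $V$ by decomposing $V$ into irreducibles over $U_q(\g)$ (a finite direct sum, since $q$ is not a root of unity) and using that a finite direct sum of completely reducible modules is completely reducible.

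There is essentially no obstacle here beyond bookkeeping: the corollary is a formal consequence of the three cited results once one checks that the irreducibility of $M$ guarantees non-degeneracy of the Shapovalov form on $M^+_V$. The one point that deserves a sentence of care is the reduction from general finite-dimensional $V$ to irreducible $V$ --- one should note that all modules under consideration are $U_q(\h)$-diagonalizable and that $U_q(\g)$ at non-root-of-unity $q$ has semisimple category of finite-dimensional modules, so the decomposition $V=\bigoplus_a V_a$ into irreducibles is legitimate and $V\tp M=\bigoplus_a (V_a\tp M)$ reduces the problem termwise. Everything else is the direct transcription of Proposition~\ref{proj_extr_tw} through Proposition~\ref{V-Z-extr} into Theorem~3.9.
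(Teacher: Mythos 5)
Your proposal is correct and follows exactly the route the paper intends: the corollary is stated without proof precisely because it is the chain Proposition~\ref{proj_extr_tw} $\Rightarrow$ invertibility of $\theta_{M,V}$ $\Rightarrow$ (via Proposition~\ref{V-Z-extr} and non-degeneracy of the Shapovalov form on $M^+_V$, which follows from the orthogonal decomposition $M=M^+_V\op\omega(I^+_V)M$ and irreducibility of $M$) non-degeneracy of the canonical form on $(V\tp M)^+$ $\Rightarrow$ complete reducibility by the criterion of \cite{M3}. Your extra care about reducing general finite-dimensional $V$ to irreducible $V$ is a legitimate point the paper leaves implicit, and your treatment of it is fine.
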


\section{Equivariant  vector bundles over quantum projective spaces}
\subsection{Parabolic Verma modules}
The base module $M$ is an example of parabolic induction from a one-dimensional representation
of $U_q(\k)$. Let us remind this construction in general.
The  Lie subalgebras $\p_\pm=\k+\g_\pm\subset \g$ are called parabolic with the Levi factor $\k$.
Their nilradicals $\n_\pm$ are spanned by the vectors of roots from $\Rm_\g^\pm\backslash \Rm_\k^\pm=\{\pm\bt_i\}_{i=1}^n$ respectively, 
thus $\p_\pm=\k\ltimes \n_\pm$.
Denote by $U_q(\n_\pm)$ the subalgebra in $U_q(\g)$ generated by $\{e_{\bt_i}\}_{i=1}^n$, and similarly
by $U_q(\n_-)$ the subalgebra  generated by $\{f_{\bt_i}\}_{i=1}^n$.
The algebra $U_q(\g)$ enjoys a factorization $U_q(\g)=U_q(\n_\mp)U_q(\p_\pm)$ with $U_q(\p_\pm)=U_q(\k)U_q(\n_\pm)$, facilitated by the PBW basis.

Fix a finite-dimensional irreducible $U_q(\k)$-module $X$ of highest weight $\xi$ and  make it a module over $U_q(\p_+)$ assuming the trivial action of $U_q(\n_+)$.
Denote by $M_X$ the induced module $U_q(\g)\tp_{U_q(\p_+)}X$.
One similarly defines  parabolic Verma modules of lowest weight, $N_{X}=U_q(\g)\tp_{U_q(\p_-)}X$.
Applying Frobenius reciprocity twice, one can check that  $M_{X}\tp N_{Y}$ is isomorphic to the $U_q(\g)$-module
 induced
from the $U_q(\k)$-module $X\tp Y$.

Recall that the character $\Char(W)$ of a $U_q(\h)$-module $W$ is defined as the formal sum
$\sum_{\mu\in \La(W)}\dim W[\mu] q^{\mu}$, where $q^\mu$ is a homomorphism $U_q(\h)\to \C$ acting by $q^{h_\al}=q^{(\mu,\al)}$ for all $\al \in \Pi^+$. For a pair of $U_q(\h)$-modules we write $\Char(W_1)\leqslant \Char(W_2)$ if $\dim W_1[\mu]\leqslant \dim W_2[\mu]$ for all $\mu$.
The triangular factorization of $U_q(\g)$ implies that $M_X\simeq U_q(\n_-)X$.
Therefore the character of $M_X$ is equal  to $\Char(M_\C)(\Char X)$, where $M_\C$ is parabolically induced from the trivial $U_q(\k)$-module.

Given a weight $\la$ subject to $q^{2(\la,\al)}=1 $ for all $\al \in \Pi^+_\k$, one can define a
one-dimensional  $U_q(\k)$-module $\C_\la$ where $U_q(\h)$ acts by $q^{h_\al}\mapsto q^{(\la,\al)}$.
Then for any finite-dimensional $U_q(\k)$-module $X$ we set $X_\la=X\tp \C_\la$. We fix $\la$ to be the highest weight of
the base module $M$.
\begin{thm}
  Let $V$ be a finite-dimensional $U_q(\g)$-module and $V=\op_i X^i$ its irreducible decomposition over $U_q(\k)$.
  Suppose that $q^{2(\la,\al_1)}\not \in q^{2\Z}$.
  Then $V\tp M\simeq \op_i M_{X^i_\la}$ is an irreducible decomposition.
\end{thm}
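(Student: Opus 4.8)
The plan is to combine the complete reducibility established in the preceding corollary with a character computation and an identification of the irreducible summands with parabolic Verma modules. First I would invoke the Corollary to Proposition \ref{proj_extr_tw}: the hypothesis $q^{2(\la,\al_1)} = q^{2s} \notin q^{2\Z}$ is exactly the condition under which $V \tp M$ is completely reducible for all finite-dimensional $V$. So $V \tp M$ decomposes as a direct sum of irreducible highest weight $U_q(\g)$-modules, and the task reduces to identifying each summand. By the analysis in Section 3 (the parametrization of $(V\tp M)^+$ by the span $V^+_M$ of $U_q(\k)$-singular vectors, with $\La(V^+_M) = \{\nu - \sum_i m_i\bt_i\}$, $0 \leqslant m_i \leqslant \ell_i$, all multiplicity-free), the singular vectors of $V\tp M$ are in bijection with a basis of $V^+_M$, which in turn, by Gelfand--Zeitlin reduction, corresponds to the highest weights $\xi_i$ of the $U_q(\k)$-irreducibles $X^i$ appearing in $V = \op_i X^i$. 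Concretely, a $U_q(\k)$-highest weight vector of weight $\xi$ in $V$, tensored with $1_\la$ and dressed by the lift $\Fc_M$ as in the proof of Proposition \ref{sing_cover}, produces a $U_q(\g)$-singular vector of weight $\xi + \la$ in $V \tp M$.

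Next I would argue that the highest weight submodule generated by the singular vector attached to $X^i$ is isomorphic to $M_{X^i_\la}$. One direction is the universal property: since the singular vector sits in weight $\xi_i + \la$ and is annihilated by all $e_\al$, $\al \in \Pi^+$, and since $U_q(\n_+)$ acts suitably, there is a surjection $M_{X^i_\la} \twoheadrightarrow (\text{submodule generated by the singular vector})$ — here one uses that $\la$ satisfies $q^{2(\la,\al)} = 1$ for $\al \in \Pi^+_\k$ so that $X^i_\la = X^i \tp \C_\la$ is a genuine $U_q(\k)$-module of highest weight $\xi_i + \la$, inducing to $M_{X^i_\la}$. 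The reverse inequality, hence the isomorphism, is forced by a global character count. By the remark that $\Char M_X = \Char(M_\C)\,\Char(X)$ and the fact (Lemma \ref{action on M} / PBW basis) that $M = M_\C$ as a $U_q(\h)$-module with $\Char(M) = \Char(M_\C)$, we get
$$
\Char(V \tp M) = \Char(V)\,\Char(M) = \Big(\sum_i \Char(X^i)\Big)\Char(M_\C) = \sum_i \Char(M_{X^i_\la}),
$$
using $\Char(V) = \sum_i \Char(X^i)$ over $U_q(\k)$ and the weight shift by $\la$. Since we already have surjections $M_{X^i_\la} \twoheadrightarrow L_i$ onto the summands $L_i$ of the complete decomposition $V\tp M = \op_i L_i$ (indexed by the same set, by the singular-vector count above), and $\sum_i \Char(L_i) = \Char(V\tp M) = \sum_i \Char(M_{X^i_\la})$, each surjection must be an isomorphism. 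This gives $V \tp M \simeq \op_i M_{X^i_\la}$.

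The main obstacle I anticipate is making the bookkeeping in the second step airtight: one must be sure that the singular vectors produced from the $U_q(\k)$-highest weight vectors of the $X^i$ are linearly independent and exhaust $(V\tp M)^+$, and that the map $M_{X^i_\la} \to V\tp M$ is well-defined on all of the induced module (not just on the cyclic vector) — i.e. that the $U_q(\p_+)$-module structure on $X^i_\la$ genuinely matches the action on the chosen singular vector, including the vanishing of $U_q(\n_+)$. The multiplicity-freeness of the weights in $V^+_M$ and the explicit PBW description of $M$ from Lemma \ref{action on M} should handle the independence and exhaustion; the module-map compatibility follows from the definition of $X_\la$ together with $q^{2(\la,\al)}=1$ for $\al\in\Pi^+_\k$. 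Once these are in place, the character identity closes the argument with no further computation.
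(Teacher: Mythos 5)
Your proposal is correct and follows essentially the same route as the paper: complete reducibility from the corollary to Proposition \ref{proj_extr_tw}, nonzero maps $M_{X^i_\la}\to V\tp M$ indexed by the $U_q(\k)$-constituents of $V$, and the character identity $\Char(V)\Char(M)=\sum_i\Char(M_{X^i_\la})$ forcing these maps to be isomorphisms onto the irreducible summands. The only real difference is that the paper produces the maps abstractly via Frobenius reciprocity, $\Hom(M_{X^i_\la},V\tp M)\simeq\Hom(M_{X^i_\la}\tp N,V)\simeq\Hom_{U_q(\k)}(X^i,V)$, which cleanly disposes of the factorization-through-the-parabolic-Verma-module issue you flag as your main obstacle.
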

\begin{proof}
  Since $M$ is irreducible, $\Hom(M_{X^i_\la},V\tp M)\simeq \Hom (M_{X^i_\la}\tp N,V)\simeq \Hom_{U_q(\k)}(X^i,V)$.
  We will prove that all homomorphisms $M_{X^i_\la}\to V\tp M$ are embeddings. Let $M_i$ denote the image of $M_{X^i_\la}$.
  By Proposition \ref{proj_extr_tw}, $V\tp M=\op_i M_i$ is an irreducible decomposition. Therefore $\sum_{i}\Char(M_i)=\Char(M)\Char(V)$.
  On the other hand,
  $\sum_i\Char(M_{X^i_\la})=\sum_i\Char (M)\Char(X^i)=\Char (M)\Char(V)$.
  Since $\Char(M_{i})\leqslant \Char(M_{X^i_\la})$, that is possible only if $\Char(M_{i})= \Char(M_{X^i_\la})$ and therefore
   $M_i\simeq M_{X^i_\la}$ for all $i$.
\end{proof}
\noindent
Note that one cannot relax the condition on the weight $\la$ as $V$ varies over all finite-dimensional modules.

Remark that $x_i=q^{2(\la,\ve_i)}$ should be treated as constants rather than functions of $q$
(e.g., via  rescaling of  $\la$).
They have the meaning of eigenvalues of matrices comprising the conjugacy class 
that represents $\Pbb^n$. Once $x_1 x_2^{-1}\not \in q^{2\Z}$,
the modules $M_{X_\la}$ can be extended over the ring of formal power series $\C[[\hbar]]$ via $q=e^\hbar$.
This trick will be used in the next section in the quantization context.

\subsection{Projective modules over $\C_q[\Pbb^n]$}
By a classical equivariant vector bundle over $\Pbb^n$ with fiber $X$ we understand the (left or right) projective $\C[\Pbb^n]$-module of global sections denoted by $\Gamma(\Pbb^n,X)$. It can be  realized as the subspace of $\k$-invariants in
$\C[G]\tp X$, $G=GL(n+1)$, where $\k$ acts on $\C[G]$ by translations.

We understand  quantization as deformation, when complex vector spaces are extended to free $\C[[\hbar]]$-modules.
In our situation, when all modules and maps are rational in $q$, they can be considered over the
local ring of rational functions in $q$ regular at $q=1$. They can be specialized at generic $q$ meaning that
$q$ can can take all but a finite set of complex numbers including $q=1$, in every submodule of finite rank.

Let $\Ac\subset \End(M)$ denote the quantized polynomial algebra $\C_q[\Pbb^{n}]$.
By quantization of an equivariant vector bundle on $\Pbb^{n}$ in this subsection we mean a $U_q(\g)$-equivariant deformation of
$\Gamma(\Pbb^n,X)$ in the class of right $\C_q[\Pbb^{n}]$-modules. We will realize it as $\hat P (V\tp \Ac)$,
where $V\supset X$ is a finite-dimensional $U_q(\g)$-module and $\hat P\in \End(V)\tp \Ac$ is an $\U_q(\g)$-invariant idempotent.
Such idempotents can be constructed via a  direct sum decomposition of $V\tp M$ due to the following fact.

\begin{propn}
For all $q$ except for a finite set, every invariant projector from $V\tp M$ onto an irreducible submodule belongs to  $\End(V)\tp \Ac$.
\end{propn}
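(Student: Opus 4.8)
The plan is to argue that the invariant projectors in question are matrix entries of $U_q(\g)$-intertwiners involving $M$ and its dual, and that such intertwiners are forced to lie in $\End(V)\tp\Ac$ because $\Ac=\C_q[\Pbb^n]$ is precisely the $U_q(\g)$-invariant part of $\End(M)$. Concretely, let $V\tp M=\op_i M_i$ be the irreducible decomposition furnished by Proposition \ref{proj_extr_tw} (equivalently, by the preceding Theorem, $M_i\simeq M_{X^i_\la}$). Fix one summand $M_i$ and the corresponding invariant projector $P_i\in\End(V\tp M)=\End(V)\tp\End(M)$. The first step is to identify $P_i$ intrinsically: since the decomposition is multiplicity-free in the relevant range (the $\k$-isotypic components $X^i$ occur with multiplicity one in $V$ by the Gelfand--Zeitlin branching used in Proposition \ref{sing_cover}), $P_i$ is the unique $U_q(\g)$-invariant idempotent with image $M_i$, hence canonically attached to the data $(V,M)$.

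Next I would pin down which slot of $\End(V)\tp\End(M)$ is forced to be invariant. Because $P_i$ commutes with the $U_q(\g)$-action on $V\tp M$, and the action is the tensor-product (coproduct) action, $P_i$ is $U_q(\g)$-invariant for the adjoint action on $\End(V)\tp\End(M)$; but this is \emph{not} the same as each tensor leg being separately invariant. The key move is therefore to re-express $P_i$ using the intertwiner picture: $P_i$ factors as $V\tp M\to M_i\hookrightarrow V\tp M$, and an $\End(V)$-valued function on $M$ arises by pairing with the highest vector $1_\nu\in V$ and its dual, exactly as in the isomorphism $M^+_V\simeq(V\tp M)^+$ recalled before Proposition \ref{V-Z-extr}. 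Writing $P_i=\sum_a E_a\tp \pi_i(a)$ in a basis $\{E_a\}$ of $\End(V)$, the invariance of $P_i$ under $\Delta$ translates, after contracting the $V$-leg against a matrix coefficient of $V$, into the statement that each operator $\pi_i(a)\in\End(M)$ is fixed by the $U_q(\g)$-action on $\End(M)$ twisted by that matrix coefficient. Summing over a basis of matrix coefficients of the finite-dimensional $V$ (which span, because $V$ is a module over the Hopf algebra $U_q(\g)$), one concludes that the span of the $\pi_i(a)$ is a $U_q(\g)$-submodule of $\End(M)$ generated by $U_q(\g)$-finite, in fact $\Ac$-valued, vectors; since $\C_q[\Pbb^n]=\Ac$ is by its very definition (see Section \ref{SecPrelim} and the description of the base module) the subalgebra of $U_q(\g)$-locally finite operators on $M$ realizing the quantized coordinate ring, each $\pi_i(a)$ lies in $\Ac$, i.e. $P_i\in\End(V)\tp\Ac$.

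Finally, the \emph{generic }$q$ caveat and the role of finiteness: everything above is stated for generic $q$, and the decomposition $V\tp M=\op M_i$ only holds when the extremal twist $\theta_{M,V}$ is invertible, which by Proposition \ref{proj_extr_tw} fails on a set of $q$ determined by the vanishing of the finitely many factors $\phi_{\xi,\bt_j,k}$ entering $\det\theta_{M,V}$; outside that finite set the projectors $P_i$ exist and are rational in $q$, hence extend to the local ring at $q=1$ and specialize at all but finitely many $q$. I would make this precise by noting that the matrix of each $P_i$ in the PBW basis of $M$ has entries that are rational functions of $q$ (they are built from the explicit action formulas of Lemma \ref{action on M} and the inverse extremal twist of \cite{EV}), with poles only at the zeros of the above factors, so discarding that finite set of $q$ gives the claim. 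The main obstacle I anticipate is the second step — carefully justifying that $U_q(\g)$-invariance of $P_i$ as an element of $\End(V\tp M)$ forces the $M$-leg into $\Ac$ rather than merely into some larger invariant subspace of $\End(M)$; this requires knowing that $\Ac$ is \emph{all} of the $U_q(\g)$-locally finite part of $\End(M)$ (equivalently, that the algebra map $\C_q[\Pbb^n]\to\End(M)$ is onto the locally finite operators), a fact which should be extracted from the realization of $\C_q[\Pbb^n]$ as a $U_q(\g)$-module algebra in $\End(M)$ recalled at the start of Section 3.
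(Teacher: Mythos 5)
Your overall strategy is the same as the paper's: show that the $\End(M)$-legs of an invariant projector are locally finite for the adjoint action, and then invoke the identification of $\Ac$ with the locally finite part $\End^\circ(M)$ of $\End(M)$. You also correctly diagnose where the real content lies — in your own words, the ``main obstacle'' is knowing that $\Ac$ is \emph{all} of the locally finite part, not just an invariant subalgebra of it. But that is precisely the step you do not supply: you say it ``should be extracted from the realization of $\C_q[\Pbb^n]$ as a $U_q(\g)$-module algebra in $\End(M)$'', whereas that realization only gives the embedding $\Ac\subset\End^\circ(M)$ and says nothing about surjectivity. This is a genuine gap, and it is exactly the quantum analogue of Kostant's problem. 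The paper closes it by a multiplicity count via Frobenius reciprocity: since $\End^\circ(M)$ is the locally finite part of $M\tp N$ and $M$ is irreducible,
$$\Hom_{U_q(\g)}(M\tp N,V)\simeq\Hom_{U_q(\g)}(M,V\tp M)\simeq \Hom_{U_q(\k)}(\C,V)\simeq \Hom_{U_q(\g)}(\Ac,V)$$
for every finite-dimensional $V$, so the inclusion $\Ac\hookrightarrow\End^\circ(M)$ induces an equality of isotypic multiplicities and is therefore onto. Without some such argument your proof does not go through: a priori the $M$-legs of $\hat P$ could land in $\End^\circ(M)\setminus\Ac$.

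Two smaller points. First, your middle paragraph (contracting against matrix coefficients, ``twisted'' invariance) is an unnecessarily roundabout way of establishing local finiteness of the $M$-legs; the clean statement is that invariance of $\hat P$ forces the span of its $\End(M)$-entries to be a quotient of the finite-dimensional coadjoint module $\End(V)^*$, hence finite-dimensional and contained in $\End^\circ(M)$. Second, the multiplicity-freeness of the $\k$-decomposition of $V$, while true here by Gelfand--Zeitlin, is not needed for this proposition and distracts from the argument; the statement concerns an arbitrary invariant projector onto an irreducible summand, whatever the multiplicities. Your treatment of the exceptional set of $q$ via rationality and finite-dimensionality of the relevant isotypic components matches the paper and is fine.
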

\begin{proof}
For generic $q$, the algebra  $\Ac$ is embedded in the locally finite part $\End^\circ(M)$ of  the adjoint $U_q(\g)$-module $\End(M)$, which  is isomorphic to the locally finite part of $M\tp N$.  Frobenius reciprocity yields
  $$\Hom_{U_q(\g)}(M\tp N,V)\simeq\Hom_{U_q(\g)}(M,V\tp M)\simeq \Hom_{\k}(\C,V)\simeq \Hom_{U_q(\g)}(\Ac,V)$$
for every finite-dimensional $U_q(\g)$-module $V$.
Therefore the image of $\Ac$ in $\End(M)$ coincides with $\End^\circ(M)$
(this is a quantum group version of Kostant's  problem,  cf. \cite{KST}).
 An invariant projector $\hat P\in \End(V\tp M)$ is a matrix with entries in $\End^\circ(M)$, therefore it is in $\End(V)\tp \Ac$.
 Since $\hat P$ is rational in $q$ and all isotypic components of $\Ac$  are finite-dimensional, this is true for all  but a finite
 number of values of $q$.
\end{proof}
\noindent

Irreducible decomposition of $V\tp M$  depends on a choice of basis in $V^{\k_+}=V^+_M$.
Let $\hat P\in \End(V)\tp \Ac$ be the projector to a particular copy of $M_{X_\la}$ determined by $v\in V^{\k_+}\cap X$.
Here $v$ is the highest vector of an irreducible $\k$-submodule $X$.
\begin{thm}
\label{hom-bund1}
The right $\Ac$-module $\hat P(V\tp \Ac)$  is a quantization of $\Gamma(\Pbb^n,X)$.
\end{thm}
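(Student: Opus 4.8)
The plan is to exhibit $\hat P(V\tp\Ac)$ as a $U_q(\g)$-equivariant deformation of the classical module of sections $\Gamma(\Pbb^n,X)$, by matching it term-by-term with the classical realization of that module as $\k$-invariants in $\C[G]\tp X$ and then verifying flatness of the deformation. First I would record the setup: by the previous proposition, $\hat P\in\End(V)\tp\Ac$ is an honest $U_q(\g)$-invariant idempotent (valid for all but finitely many $q$), so $\hat P(V\tp\Ac)$ is a well-defined right $\Ac$-submodule of $V\tp\Ac$, and since $V\tp\Ac$ is free of finite rank over $\Ac$, the image of an idempotent is projective. Equivariance is automatic: $\hat P$ is invariant, hence commutes with the diagonal $U_q(\g)$-action on $V\tp\Ac$ (where $\Ac=\C_q[\Pbb^n]$ carries its module-algebra action), so $\hat P(V\tp\Ac)$ is a $U_q(\g)$-submodule. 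Thus the content of the theorem is the \emph{classical limit} identification plus \emph{flatness}.

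Next I would set up the classical limit. By the Remark following the Theorem in §4.1, once $x_1x_2^{-1}\notin q^{2\Z}$ all the relevant modules are rational in $q$ and specialize over the local ring of functions regular at $q=1$; in particular $V$, $\Ac$, and $M$ all have classical limits $V|_{q=1}$, $\C[\Pbb^n]$, $M|_{q=1}$, and $V\tp M\simeq\bigoplus_i M_{X^i_\la}$ specializes to the corresponding classical decomposition $\bigoplus_i M_{X^i}^{\mathrm{cl}}$ of the classical tensor product (this uses Proposition \ref{proj_extr_tw} and the Theorem in §4.1 to guarantee the rank of each summand is locally constant near $q=1$, so no collapsing occurs). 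The idempotent $\hat P$, being rational in $q$ and well-defined at $q=1$, specializes to the classical invariant projector $\hat P^{\mathrm{cl}}\in\End(V)\tp\C[\Pbb^n]$ onto the copy of $M_{X}^{\mathrm{cl}}$ determined by the same highest vector $v\in V^{\k_+}\cap X$. Hence $\hat P(V\tp\Ac)$ specializes to $\hat P^{\mathrm{cl}}(V\tp\C[\Pbb^n])$, and I must identify the latter with $\Gamma(\Pbb^n,X)$.

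For that identification I would invoke Frobenius reciprocity exactly as in the proof of the preceding proposition, but now in its module form: classically, $\C[G]\tp X$ with $\k$ acting by right translation on $\C[G]$ has $\k$-invariants $\Gamma(\Pbb^n,X)$; on the other hand $\C[\Pbb^n]\subset\End(M^{\mathrm{cl}})$ is the $\k$-invariant (Kostant) part, and the chain of isomorphisms $\Hom_{U(\g)}(M^{\mathrm{cl}}\tp N^{\mathrm{cl}},V)\simeq\Hom_{U(\g)}(M^{\mathrm{cl}},V\tp M^{\mathrm{cl}})\simeq\Hom_\k(\C,V)$ shows that selecting the summand $M_X^{\mathrm{cl}}$ of $V\tp M^{\mathrm{cl}}$ via $v$ picks out precisely the $\C[\Pbb^n]$-submodule $\hat P^{\mathrm{cl}}(V\tp\C[\Pbb^n])$ whose fiber at the base point is $X$; by the Serre--Swan/equivariant-bundle dictionary recalled at the start of §4.2 this is $\Gamma(\Pbb^n,X)$ as a right $\C[\Pbb^n]$-module. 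Combining: $\hat P(V\tp\Ac)$ is a right $\Ac$-module, free-module-summand hence projective, rational in $q$ and defined at $q=1$, with classical limit $\Gamma(\Pbb^n,X)$; by the definition of quantization adopted in §4.2 it \emph{is} a quantization of $\Gamma(\Pbb^n,X)$.

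The main obstacle is the flatness/no-collapse bookkeeping at $q=1$: one must ensure that neither the rank of $\hat P(V\tp\Ac)$ over $\Ac$ nor the $\Ac$-module structure degenerates in the limit. This is where Proposition \ref{proj_extr_tw} does the real work — it guarantees $V\tp M$ stays completely reducible with summands of the expected characters for $q$ near $1$ (since $x_1x_2^{-1}\notin q^{2\Z}$ is an open condition containing $q=1$ under the convention that the $x_i$ are fixed constants), so $\hat P$ neither acquires a pole nor drops rank, and the limit of the image is the image of the limit. Once this is in hand the remaining steps are formal. I would close by noting that the $U_q(\g)$-equivariance constructed above deforms the classical $\g$-equivariance of $\Gamma(\Pbb^n,X)$, so the deformation is equivariant in the required sense.
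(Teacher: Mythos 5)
Your proposal is correct in substance but follows a genuinely different route from the paper. The paper's proof works uniformly in $q$: it identifies $\hat P(V\tp \Ac)$ with the locally finite part of $\Hom_\C(M,M_{X_\la})$, hence with the locally finite part of $M_{X_\la}\tp N$ (using that $\Ac$ exhausts $\End^\circ(M)$ and that $\hat P$ cuts out the summand $M_{X_\la}$ of $V\tp M$), and then invokes the Peter--Weyl decomposition of $\Tc$ to recognize this as the quantum counterpart of $(\C[G]\tp X)^{\k}$; the classical limit and flatness come for free from this explicit model, which is moreover reused in the symmetric-pair realization of Section 4.4. You instead argue ``abstractly at $q$, then classically at $q=1$'': projectivity and equivariance follow formally from $\hat P$ being an invariant idempotent on the free module $V\tp\Ac$, flatness from rank-constancy of the specialized idempotent (guaranteed by Proposition \ref{proj_extr_tw} and the convention that the $x_i$ are constants), and the identification of the classical limit with $\Gamma(\Pbb^n,X)$ from equivariant Serre--Swan over the homogeneous space. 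Both arguments are sound; yours is more elementary and avoids the quantum Peter--Weyl step, while the paper's yields an explicit $q$-model of the module. The one step you should make explicit is the fiber computation at the base point: you assert, but do not verify, that evaluating $\hat P^{\mathrm{cl}}$ at the class representative $t$ gives the $\k$-equivariant projector onto $X\subset V$, which is what licenses the appeal to the classical dictionary. This is true and is essentially the classical shadow of Lemma \ref{2projectors} and the surrounding discussion in Section 4.3, but as written it is the only load-bearing claim in your argument that is left unproved.
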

\begin{proof}
  The  left $U_q(\g)$-equivariant  right $\Ac$-module $\hat P(V\tp \Ac)$ can be realized
  on the locally finite part of $\Hom_\C(M, M_{X_\la})$ and is isomorphic to  locally finite part of $M_{X_\la}\tp N$ as a $U_q(\g)$-module,
  for generic $q$.
  The latter is proved to be isomorphic to subspace of $\k$-invariants in $\C[G]\tp X$, thanks to  the Peter-Weyl decomposition
  $\U^*_q(\g)=\sum_{irrep\> V}V\tp V^*$ (summation taken is over classes of irreducible finite-dimensional modules). So, $X$ is the fiber of the bundle in the classical limit.
\end{proof}


\subsection{Coideal stabilizer subalgebra}

We switch to an alternative realization of quantum vector bundles in terms of quantum symmetric pairs.
It is convenient to pass to the left $\Ac$-module version.

Let $\Tc$ denote the Hopf dual of $U_q(\g)$ and $T=(T_{ij})_{i,j=1}^{n+1}$ its matrix of generators.
It is invertible with $(T^{-1})_{ij}=\gm(T_{ij})$, where
$\gm$ is the antipode of $\Tc$.
One has two commuting left and right translation actions of $U_q(\g)$ on $\Tc$ expressed through the Hopf paring and the
comultiplication in $\Tc$ by
$$
h\tr a= a^{(1)}(h,a^{(2)}),\quad  a\tl h= (a^{(1)},h) a^{(2)},\quad  a\in \Tc, \quad h\in U_q(\g).
$$
They are compatible with multiplication on $\Tc$ making it a $U_q(\g)$-bimodule algebra.

Denote by $\pi\colon U_q(\g)\to \End(\C^{n+1})$ the natural representation homomorphism.
Let $\Ru$ be a universal R-matrix of $U_q(\g)$.
The element  $\Ru_{21}\Ru_{}$  commutes with the coproduct $\Delta(x)$ for all $x\in U_q(\g)$.
Introduce the matrix $\Q=(\pi\tp \id)(\Ru_{21}\Ru_{})$ with entries in $U_q(\g)$.
Denote by $R=(\pi\tp \pi)(\Ru) \in \End(\C^{2n+1})\tp \End(\C^{2n+1})$ the image of $\Ru$.
One can choose  $\Ru$ such that $R$ is proportional to the $R$-matrix used in
\cite{FRT}. Following   \cite{M2}, introduce a matrix $A\in \End(\C^{2n+1})$ by
$$
A=\quad
\left(\begin{array}{cccccc}
  x_1+q^{-2}x_2& 0 & \ldots & 0 &c \\
  0 &q^{-2}x_2& 0 & 0 &0 \\
  \vdots & 0 &\ddots&0&\vdots\\
  0 & 0 & 0 &q^{-2}x_2&0  \\
  d  & 0 & \ldots &0&0  \\
\end{array}
\right),
$$
where $cd=-q^{-2}x_1x_2\in \C\backslash\{0\}$. It
solves the Reflection Equation  \cite{KS}
$$
R_{21}A_1R_{12}A_2=A_2R_{21}A_1R_{12}\in \End(\C^{2n+1})\tp \End(\C^{2n+1}),
$$
where the indices mark the tensor factors.
The assignment $\Q\mapsto T^{-1}A T$
defines an equivariant embedding  $\Ac\subset  \Tc$, where $\Tc$
is regarded as a $U_q(\g)$-module  under the left translation action. The matrix $A$ determines a homomorphism
$\chi\colon \Ac\to \C$, $\Q_{ij}\mapsto A_{ij}$. It factors through the composition $\Ac\to  \Tc\to \C$,
where the right arrow is the counit $\epsilon$.

The entries of the matrix $(\pi\tp \id)(\Ru_{21})A_1(\pi\tp \id)(\Ru_{12})\in \End(\C^{2n+1})\tp \U_q(\g)$ generate a left coideal subalgebra $\Bc\subset \U_q(\g)$,
such that $\Ac$ is the subalgebra of $\Bc$-invariants: $a\tl b=\epsilon(b)a$ for all $b\in \Bc$ and $a\in \Ac$. 
The algebra $\Bc$ is a deformation of $U(\k')$,
where $\k'\simeq \k$.

Let $V$ be a finite-dimensional $U_q(\g)$-module and $\hat P\in V\tp \Ac$ an invariant idempotent. In the Sweedler notation, 
the invariance reads as $\hat P_1\tp h\tr\hat P_2=\gm(h^{(1)})\hat P_1h^{(2)}\tp \hat P_2$ for all $h\in U_q(\g)$.
\begin{lemma}
  The  projector  $P=\hat P_1\chi(\hat P_2)\in \End(V)$ commutes with $\Bc$.
\end{lemma}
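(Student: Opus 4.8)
The plan is to translate the statement ``$P = \hat P_1 \chi(\hat P_2)$ commutes with $\Bc$'' into the language of the two translation actions on $\Tc$, and to exploit the fact that $\chi$ factors through the counit $\epsilon$ of $\Tc$ together with the fact that $\Ac \subset \Tc$ is exactly the subalgebra of $\Bc$-invariants for the right translation action. First I would recall the structure: $\Bc \subset U_q(\g)$ is a left coideal subalgebra, so $\Delta(b) \in U_q(\g) \tp \Bc$ for all $b \in \Bc$, and the defining property of $\Ac$ is $a \tl b = \epsilon(b) a$ for all $b \in \Bc$, $a \in \Ac$, where $a \tl b = (a^{(1)},b) a^{(2)}$. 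Since the idempotent $\hat P \in \End(V) \tp \Ac$ lives (in its second tensor leg) inside $\Ac$, each entry $\hat P_2$ is $\Bc$-invariant under the right translation action in this sense.

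The key computation is to rewrite $P \cdot b$ and $b \cdot P$ inside $\End(V)$ for $b \in \Bc$ using the invariance of $\hat P$. The invariance relation is stated in the lemma's preamble as $\hat P_1 \tp h \tr \hat P_2 = \gm(h^{(1)}) \hat P_1 h^{(2)} \tp \hat P_2$ for all $h \in U_q(\g)$, where $h \tr a = a^{(1)}(h, a^{(2)})$ is the left translation action on $\Tc$. The plan is: apply $\chi$ (equivalently $\epsilon_\Tc$) to the second leg of both sides, but first one must relate $\epsilon_\Tc(h \tr \hat P_2)$ to a pairing expression. One has $\epsilon_\Tc(h \tr a) = \epsilon_\Tc(a^{(1)})(h, a^{(2)}) = (h, a)$, so applying $\epsilon$ to the left side of the invariance relation gives $\hat P_1 (h, \hat P_2)$, and applying it to the right side gives $\gm(h^{(1)}) \hat P_1 h^{(2)} \chi(\hat P_2) = \gm(h^{(1)}) P h^{(2)}$. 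Thus for every $h \in U_q(\g)$,
\[
\hat P_1 (h, \hat P_2) = \gm(h^{(1)})\, P\, h^{(2)}.
\]
Now I would specialize $h = b \in \Bc$. On the left, $\hat P_1 (b, \hat P_2)$: since $\hat P_2 \in \Ac$ and $b \in \Bc$, I want this pairing to collapse. Indeed $(b, \hat P_2) = \epsilon_\Tc(\hat P_2^{(1)})(b, \hat P_2^{(2)})$ is not automatically $\epsilon(b)\chi(\hat P_2)$; rather, the right-invariance $\hat P_2 \tl b = \epsilon(b)\hat P_2$ reads $(\hat P_2^{(1)}, b)\hat P_2^{(2)} = \epsilon(b)\hat P_2$, and applying $\epsilon_\Tc$ to both sides of \emph{this} gives $(\hat P_2, b) = \epsilon(b)\chi(\hat P_2)$ — wait, applying $\epsilon_\Tc$ to $(\hat P_2^{(1)},b)\hat P_2^{(2)}$ yields $(\hat P_2^{(1)},b)\epsilon_\Tc(\hat P_2^{(2)}) = (\hat P_2, b)$, and to the right side yields $\epsilon(b)\chi(\hat P_2)$. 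So indeed $(b, \hat P_2) = \epsilon(b)\chi(\hat P_2)$, hence the left side of the displayed identity is $\epsilon(b) P$. On the right side, since $\Bc$ is a \emph{left} coideal, $b^{(1)} \tp b^{(2)} \in U_q(\g) \tp \Bc$, so $\gm(b^{(1)}) P b^{(2)}$ has its second factor in $\Bc$; but I need to massage this into $P b$. Applying the identity once more or using $\epsilon(b^{(2)})b^{(1)} = b$ and the counit axiom, one checks $\gm(b^{(1)})Pb^{(2)}$ evaluated correctly gives a relation between $Pb$, $bP$ and $\epsilon(b)P$. The cleanest route: the displayed identity $\hat P_1(h,\hat P_2) = \gm(h^{(1)}) P h^{(2)}$ is equivalent to saying $P$ intertwines appropriately; rewriting it as $h^{(1)} \hat P_1 (h^{(2)}, \hat P_2)$ — better, multiply the original invariance by suitable factors. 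I would set $h = b$ and also use the coideal property to get $\gm(b^{(1)}) P b^{(2)} = \epsilon(b) P$ directly from the left side being $\epsilon(b)P$; then multiply on the left by $b'$ appropriately, or simply apply $\mathrm{id}$ to recover, from $\gm(b^{(1)}) P b^{(2)} = \epsilon(b) P$, that $P b = b P$ by multiplying both sides on the left by $b^{(1)}{}'$... the standard manipulation: from $\gm(b_{(1)}) P b_{(2)} = \epsilon(b) P$ multiply left by $b_{(1)}$ (of a fresh coproduct) — actually apply the map $b \mapsto b_{(1)} \otimes b_{(2)}$ and then $\gm(b_{(1)})Pb_{(2)} = \epsilon(b)P$ is exactly the statement ``$P$ is $\Bc$-invariant for the adjoint action'', which since $\Bc$ acts and $\gm(b_{(1)})Xb_{(2)} = \epsilon(b)X$ for all $b$ means $X$ commutes with $\Bc$ — this is a standard equivalence (using that $\Bc$ is generated as an algebra and applying with $b$ ranging, or invoking $b X = b_{(1)} \epsilon(b_{(2)}) X = b_{(1)} \gm(b_{(2)}) X b_{(3)} = \ldots$, wait need $\gm$ on the correct leg; the honest version uses $bX = b_{(1)}(\gm(b_{(2)}) X b_{(3)})$ which requires $\Delta^2(b) \in U_q(\g)\tp U_q(\g) \tp \Bc$, true since $\Bc$ is a left coideal, so the bracketed term is $\epsilon(b_{(2)})X$, giving $bX = b_{(1)}\epsilon(b_{(2)})X = bX$ — circular).

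The main obstacle, then, is precisely this last algebraic step: deducing genuine commutativity $Pb = bP$ from the ``adjoint-invariance'' relation $\gm(b^{(1)})Pb^{(2)} = \epsilon(b)P$ for all $b \in \Bc$. For a Hopf algebra this equivalence is standard (invariance under adjoint action $\Leftrightarrow$ centrality), but $\Bc$ is only a left coideal subalgebra, not a Hopf subalgebra, so one cannot directly use an antipode on $\Bc$. The resolution I would pursue: note the relation $\gm(b^{(1)}) P b^{(2)} = \epsilon(b) P$ holds for the coproduct \emph{of $U_q(\g)$}, where $b^{(1)} \in U_q(\g)$ (not necessarily in $\Bc$) and $b^{(2)} \in \Bc$. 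Multiply both sides on the left by $c \in U_q(\g)$ and choose $c = b'^{(1)}$ coming from another generator, or more simply: since the relation holds for all $b \in \Bc$ and $\Bc$ is closed under the (left) coideal structure, apply it with $b$ replaced by $b^{(2)}$ inside a coproduct of a fresh $b$, i.e., use $\Delta(b) = b^{(1)} \tp b^{(2)}$ with $b^{(1)} \in U_q(\g)$, $b^{(2)} \in \Bc$, and compute $b P = b^{(1)} \epsilon(b^{(2)}) P = b^{(1)} \gm(b^{(2)(1)}) P b^{(2)(2)}$; coassociativity gives $b^{(1)} \gm(b^{(2)}) \tp b^{(3)}$ with $b^{(2)}, b^{(3)}$ — but $\gm(b^{(2)})$ needs $b^{(2)} \in U_q(\g)$, which it is, and $b^{(3)} \in \Bc$ since $\Bc$ is a left coideal (the \emph{last} tensor leg stays in $\Bc$). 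Then $b^{(1)}\gm(b^{(2)}) = \epsilon(b^{(1)}b^{(2)}\text{-first-part})$... precisely $b^{(1)}\gm(b^{(2)}) \tp b^{(3)} \mapsto \epsilon(b^{(1)}) \tp b^{(2)}$ after applying $m(\id\tp\gm)\Delta = \epsilon$ on the first two legs, giving $bP = \epsilon(b^{(1)}) P b^{(2)}$ — hmm that should be $bP = P b$ only if $\epsilon(b^{(1)})b^{(2)} = b$, which is the counit axiom, yes. So $bP = Pb$. I would write this chain carefully with explicit Sweedler indices and coassociativity, flagging that the only structural input needed is the left-coideal property $\Delta(\Bc) \subset U_q(\g) \tp \Bc$ together with the Hopf algebra axioms of $U_q(\g)$ and the key identity $\hat P_1(b, \hat P_2) = \epsilon(b) P$ derived above from the $U_q(\g)$-invariance of $\hat P$ and the $\Bc$-invariance of $\Ac$. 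Everything else is bookkeeping.
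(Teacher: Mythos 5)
Your proof is correct and follows essentially the same route as the paper's: apply the counit $\chi=\epsilon|_{\Ac}$ to the second leg of the invariance relation, use $\epsilon(b\tr a)=(b,a)=\epsilon(a\tl b)=\epsilon(b)\epsilon(a)$ for $b\in\Bc$, $a\in\Ac$ to obtain $\gm(b^{(1)})Pb^{(2)}=\epsilon(b)P$. The paper dismisses the final passage from this adjoint-invariance to $bP=Pb$ with ``which implies the lemma''; your explicit chain $bP=b^{(1)}\epsilon(b^{(2)})P=b^{(1)}\gm(b^{(2)})Pb^{(3)}=\epsilon(b^{(1)})Pb^{(2)}=Pb$, justified by the left-coideal property $\Delta(\Bc)\subset U_q(\g)\tp\Bc$ and coassociativity, is exactly the intended (and correct) completion of that step.
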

\begin{proof}
Recall that $\chi$ is the restricted counit $\epsilon$ once $\Ac$ is realized as a subalgebra in $\Tc$.
For all $b\in \Bc$ and all $a\in \Ac$ one has
$\epsilon(b\tr a)=(b,a)=\ve(a\tl b)=\epsilon(b)\epsilon(a)$. Then
 $\gm(b^{(1)})P b^{(2)}=\gm(\hat P_1)\epsilon(b\tr\hat P_2)=\epsilon(b)P$, which implies the lemma.
\end{proof}
\begin{propn}
\begin{enumerate}
  \item Every finite-dimensional (right) $U_q(\g)$-module $V$ is completely reducible over $\Bc$ for all $q$ except for a finite set.
  \item Each irreducible $\Bc$-submodule in $V$ is a deformation of a classical $U(\k')$-submodule with the same multiplicity.
  \item Each $\Bc$-submodule in $V$ is the image of a $\Bc$-invariant projector $(\id\tp \chi)(\hat P)$, where
  $\hat P\in End(V)\tp \Ac$ is  a $U_q(\g)$-invariant idempotent.
\end{enumerate}
\end{propn}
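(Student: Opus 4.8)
The strategy is to transfer the three statements from the already-established facts about tensor products $V\tp M$ to the coideal picture, using the dictionary between invariant idempotents in $\End(V)\tp\Ac$ and $\Bc$-module projectors provided by the preceding Lemma. First I would note that, by the Corollary to Proposition \ref{proj_extr_tw}, under the standing hypothesis $q^{2s}\not\in q^{2\Z}$ the module $V\tp M$ is completely reducible, and by Theorem \ref{hom-bund1} (and the Theorem preceding it) the irreducible summands are the parabolic Verma modules $M_{X^i_\la}$ indexed by the irreducible $U_q(\k)$-constituents $X^i$ of $V$. The Proposition in the previous subsection gives, for all but finitely many $q$, invariant idempotents $\hat P_i\in\End(V)\tp\Ac$ realizing this decomposition. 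Applying the character map $\chi\colon\Ac\to\C$ (equivalently the restricted counit of $\Tc$) entrywise, the Lemma says each $P_i=(\id\tp\chi)(\hat P_i)$ is a $\Bc$-module projector on $V$. So the plan is: (i) check that the $P_i$ are orthogonal and sum to the identity, giving a $\Bc$-module decomposition $V=\op_i P_iV$; (ii) identify the ranks/multiplicities with the classical ones; (iii) prove irreducibility of each $P_iV$ over $\Bc$.

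For (i): since $\chi$ is an algebra homomorphism $\Ac\to\C$ and $V\tp\Ac$ is a $\C$-algebra via the product on both factors, $\id\tp\chi$ is an algebra map $\End(V)\tp\Ac\to\End(V)$; hence $\sum_i\hat P_i=1\tp 1$ and $\hat P_i\hat P_j=\dt_{ij}\hat P_i$ pass to $\sum_iP_i=\id_V$ and $P_iP_j=\dt_{ij}P_i$. This immediately yields the direct sum decomposition $V=\op_iP_iV$ as $\Bc$-modules. For (ii): $P_i$ depends rationally on $q$ (finitely many bad values excluded), and at $q=1$ the idempotent $\hat P_i$ specializes to the classical invariant projector onto the copy of $\Gamma(\Pbb^n,X^i)$, while $\chi$ specializes to evaluation at the base point of $\Pbb^n$; so $P_i|_{q=1}$ is the classical projector onto the fiber $X^i$ over that point, a $\k'$-submodule (here $\k'$ is the $q$-deformed isotropy algebra of that point, $\k'\simeq\k$). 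Thus $\rk P_iV=\dim X^i$ is the classical multiplicity, and $P_iV$ is a flat deformation of the $U(\k')$-module $X^i$, giving parts 2 and 3 — part 3 being essentially the definition of $P_i$ together with the Lemma.

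For (iii), irreducibility: the cleanest route is via Frobenius reciprocity and a dimension/character count, paralleling the proof of the preceding Theorem. Since $\Ac=\Tc^{\Bc}$ (the subalgebra of $\Bc$-invariants under right translation) and $\C_q[\Pbb^n]\subset\End(M)$ with $M$ the base module, $\Bc$-submodules of $V$ correspond, via the idempotents $(\id\tp\chi)(\hat P)$ with $\hat P$ ranging over $U_q(\g)$-invariant idempotents in $\End(V)\tp\Ac$, to $U_q(\g)$-submodules of $V\tp M$; an indecomposable idempotent on the $V\tp M$ side (projecting onto $M_{X^i_\la}$, which is irreducible) must map to an indecomposable idempotent on the $V$ side, so $P_iV$ is $\Bc$-indecomposable, and since it is a flat deformation of the irreducible $\k'$-module $X^i$ it is in fact $\Bc$-irreducible for generic $q$ (any proper $\Bc$-submodule would, by flatness, produce a proper $\k$-submodule of $X^i$ at $q=1$). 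Running this over all $i$ simultaneously gives complete reducibility, which is part 1.

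The main obstacle is step (iii): establishing that the correspondence $\hat P\mapsto(\id\tp\chi)(\hat P)$ between $U_q(\g)$-invariant idempotents in $\End(V)\tp\Ac$ and $\Bc$-module idempotents in $\End(V)$ is in fact a \emph{bijection} onto the $\Bc$-idempotents — the Lemma only gives one direction. To close this I would use that $\Ac$ is the full $\Bc$-invariant part of $\Tc$ together with the Peter--Weyl-type decomposition of $\Tc$ and the fact (from the Kostant-problem argument in the previous subsection) that the image of $\Ac$ in $\End(M)$ is all of $\End^\circ(M)$; concretely, an arbitrary $\Bc$-linear projector $P\in\End(V)$ can be averaged against $\Tc$ to produce a $U_q(\g)$-invariant element $\hat P$ of $\End(V)\tp\Tc$ whose components land in $\Tc^{\Bc}=\Ac$ precisely because $P$ commutes with $\Bc$, and $(\id\tp\chi)(\hat P)$ recovers a scalar multiple of $P$. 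Making this averaging/reconstruction precise — and checking it preserves idempotency and is compatible with the $q=1$ specialization — is the technical heart; everything else is bookkeeping with characters and flatness already done in Section 3.
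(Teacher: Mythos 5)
Your steps (i) and (ii) match the paper: since $\chi$ is an algebra homomorphism, $\id\tp\chi$ carries the unit resolution $\id_{V\tp M}=\sum_i\hat P_i$ by simple invariant idempotents to a unit resolution $\id_V=\sum_iP_i$ by $\Bc$-invariant idempotents; these are rational in $q$, hence of constant rank away from finitely many values, and specialization at $q=1$ identifies $X_i=VP_i$ with the classical $\k'$-constituents. The divergence, and the gap, is in the irreducibility step. Your first route rests on the claim that an indecomposable invariant idempotent in $\End(V)\tp\Ac$ must map to an indecomposable idempotent commuting with $\Bc$; as you yourself note, this needs the correspondence $\hat P\mapsto(\id\tp\chi)(\hat P)$ to be a bijection onto the $\Bc$-idempotents, and you defer that to an averaging construction you do not carry out. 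Your fallback --- ``any proper $\Bc$-submodule would, by flatness, produce a proper $\k$-submodule of $X^i$ at $q=1$'' --- is stated in the unworkable direction: a submodule existing at one particular value of $q$ (or even at a dense set of values) is not a priori a member of a flat family and cannot simply be specialized to $q=1$.

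The paper closes this step without either device. Since $X_i$ is irreducible over $\k'$ at $q=1$, the image of $U(\k')$ is all of $\End(X_i)$ by density; the image of $\Bc$ in $\End(X_i)$ is spanned by matrices depending rationally on $q$, so its dimension is lower semicontinuous and cannot drop below its classical value $(\dim X_i)^2$ except at finitely many $q$. Hence $\Bc$ surjects onto $\End(X_i)$ and $X_i$ is $\Bc$-irreducible generically. This yields parts 1 and 2 directly, and part 3 then follows because complete reducibility makes every $\Bc$-submodule a direct summand, whose $U_q(\g)$-invariant preimage $\hat P$ is reconstructed explicitly from $P$ by Lemma \ref{2projectors} (via the maps $\iota$ and $\bar\iota$), not by averaging. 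If you replace your step (iii) by this semicontinuity-of-the-image argument, the rest of your outline goes through.
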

\begin{proof}
Consider the unit resolution   $\id_{V\tp M} = \sum_{i}\hat P_i$  by simple invariant idempotents  $\hat P_i\in \End(V\tp \Ac)$, for generic $q$.
Let $P_i=(\id_V\tp \chi)(\hat P_i)$ be a projector to a $\Bc$-submodule in $V$.
All $P_i$ are rational functions in $q$ and deliver the unit resolution
$\id_V = \sum_{i}P_i$ for all  $q$ but from a finite set. This implies that the ranks of $P_i$ stay constant where they are defined.
In the classical limit, $X_i=VP_i$ is irreducible over $\k'$, therefore  $\Bc$ fills up entire $\End(X_i)$. That is so for almost all $q$,
since image cannot reduce in deformation. Therefore $X_i$ is irreducible over $\Bc$
for all but a finite number of $q$. This proves the first and second statement and readily implies the third since every irreducible
submodule is then separable as a direct summand. The corresponding $U_q(\g)$-invariant idempotent
can be recovered as shown in Lemma \ref{2projectors} below.
\end{proof}

\subsection{Quantum vector bundles via symmetric pairs}
\label{SecSymPair}

In this section we realize associated vector bundles
 as $\Bc$-invariants in the tensor product of $\Tc$ and right $\Bc$-modules. We will work with right modules, which corresponds to the right coset picture.


Thanks to the Peter-Weyl decomposition of $\Tc$, every finite-dimensional right $U_q(\g)$-module $V$ is a left $\Tc$-comodule.
We use a Sweedler-like notation for the left coaction $\delta\colon V\to \Tc\tp V$, namely $\delta\colon v\mapsto v^{(1)}\tp v^{[2]}$.
Then $v\tl h=(v^{(1)},h)v^{[2]}$ for $v\in V$, $h\in U_q(\g)$.

For each $\Tc$-comodule $V$, define two $\Tc$-linear automorphisms of the  left $\Tc$-module $\Tc\tp V$,
$$
\iota\colon a\tp v\mapsto av^{(1)}\tp v^{[2]}, \quad \bar \iota \colon a\tp v \mapsto  a\gm(v^{(1)})\tp v^{[2]}.
$$
It is easy  to check that $\iota\bar \iota=\bar \iota \iota=\id$ and
\be
\label{trivials}
\iota (\Tc\tp V)^\Bc=\Ac\tp V, \quad \bar \iota(\Ac\tp V)=(\Tc\tp V)^\Bc.
\ee
We make $V$ a left $U_q(\g)$-module via the action $h\btr v=v\tl \gm(h)$, $v\in V$,
and consider $\Ac\tp V$ as a left  $U_q(\g)$-module. The tensor product $\Tc\tp V$ is
also a left $U_q(\g)$-module with respect to the left translations on $\Tc$ and the trivial
action on $V$. Then the map $\iota$ is $U_q(\g)$-invariant, as well as the subspace $\iota (\Tc\tp V)^\Bc$.

The algebra $\End(V)$ becomes a natural right comodule whose
coaction satisfies the identity $(E^{(1)},h) E^{[2]}=\gm(h^{(1)})Eh^{(2)}$ for all $E\in \End(V)$ for all $h\in U_q(\g)$.
Let $\hat P\in \End(V)\tp \Ac$ be an invariant indecomposable idempotent. If $\{E_i\}\subset \End(V)$ is a basis, then $\hat P=\sum_{i} E_i\tp a_i$ for some $a_i\in \Ac\subset \Tc$. The $\Bc$-invariant projector $P=\sum_{i}E_i\eps(a_i)$ defines
an irreducible right $\Bc$-submodule $X=VP$.

\begin{lemma}
\label{2projectors}
  One has
$ \iota(1\tp  P)=\hat P_{21}$ and $\bar \iota (\hat P_{21})=1\tp P.$
\end{lemma}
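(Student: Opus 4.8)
The plan is to unwind the definitions of $\iota$, $\bar\iota$, and of the projectors $\hat P$ and $P$, and reduce both identities to a single computation using the invariance property of $\hat P$. Since $\iota$ and $\bar\iota$ are mutually inverse $\Tc$-linear automorphisms of $\Tc\tp V$, the two claimed identities $\iota(1\tp P)=\hat P_{21}$ and $\bar\iota(\hat P_{21})=1\tp P$ are equivalent, so it suffices to prove the first one. Writing $\hat P=\sum_i E_i\tp a_i$ with $E_i\in\End(V)$ and $a_i\in\Ac\subset\Tc$, the element $\hat P_{21}\in\Tc\tp\End(V)$ is $\sum_i a_i\tp E_i$ (acting on $\Tc\tp V$ in the obvious way), while $1\tp P=\sum_i 1\tp E_i\eps(a_i)$.

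First I would compute $\iota(1\tp P)$ directly. For a basis vector, $\iota(a\tp v)=av^{(1)}\tp v^{[2]}$, where $\delta(v)=v^{(1)}\tp v^{[2]}$ is the left $\Tc$-coaction; so $\iota(1\tp Pv)=(Pv)^{(1)}\tp (Pv)^{[2]}$. The key step is to rewrite the coaction applied to $Pv$ in terms of $\hat P$. Recall the comodule structure on $\End(V)$ is characterized by $(E^{(1)},h)E^{[2]}=\gm(h^{(1)})Eh^{(2)}$, equivalently the coaction of $P=\hat P_1\chi(\hat P_2)$ satisfies $P^{(1)}\tp P^{[2]}=\hat P_2\,\chi(??)$ — more precisely, I would use that $\hat P$ being $U_q(\g)$-invariant means, in Sweedler notation, $\hat P_1\tp h\tr\hat P_2=\gm(h^{(1)})\hat P_1 h^{(2)}\tp\hat P_2$, which dualizes to a statement about the $\Tc$-coaction on the $\End(V)$-leg of $\hat P$: the coaction of the operator part of $\hat P$ "is" the $\Ac$-part of $\hat P$. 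Concretely, using $P=(\id\tp\chi)(\hat P)=(\id\tp\eps)(\hat P)$ and the definition of the $\End(V)$-comodule structure, one gets $\delta_{\End(V)}(P)=\hat P_2^{(1)}\tp \hat P_1 \eps(\hat P_2^{[2]})$-type expression collapsing to $\hat P$ itself viewed in $\Tc\tp\End(V)$, i.e. $(P)^{(1)}\tp(P)^{[2]}v=$ the $\Tc$-component of $\hat P_{21}$ paired with $v$.

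Carrying this out: $(Pv)^{(1)}\tp(Pv)^{[2]}=P^{(1)}v^{(1)}\cdot(\text{stuff})$ — actually the cleanest route is to observe $Pv = \hat P_1\,\eps(\hat P_2)\,v$ and apply $\delta$, using that $\delta$ is an algebra map on the $\Tc$-side and intertwines with the $\End(V)$-comodule structure, to get $\delta(Pv)=\hat P_2\tp \hat P_1 v$ after the $\eps$ collapses one leg via the comodule axiom $(E^{(1)},h)E^{[2]}=\gm(h^{(1)})Eh^{(2)}$ evaluated against the counit. This yields $\iota(1\tp Pv)=\hat P_2\tp\hat P_1 v$, which is exactly $\hat P_{21}(1\tp v)$. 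Hence $\iota(1\tp P)=\hat P_{21}$, and applying $\bar\iota$ gives the second identity.

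The main obstacle I anticipate is bookkeeping with the several coactions and actions in play — the left translation $\tr$ on $\Tc$, the right action $\tl$ on $V$, the induced left action $\btr$ via the antipode, the left $\Tc$-comodule structure $\delta$ on $V$, and the right $\Tc$-comodule structure on $\End(V)$ — and making sure the antipodes and the counit land on the correct legs. The decisive lemma is really the compatibility $(E^{(1)},h)E^{[2]}=\gm(h^{(1)})Eh^{(2)}$ for $E\in\End(V)$, which translates the $U_q(\g)$-invariance of $\hat P$ into the comodule language; once that is set up, the verification is a short Sweedler-notation manipulation, and the relations \eqref{trivials} guarantee everything stays in the right subspaces so no extra consistency checks are needed.
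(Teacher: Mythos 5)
Your overall strategy coincides with the paper's: since $\iota\bar\iota=\bar\iota\iota=\id$, the two identities are equivalent, and the content of the lemma is that the $U_q(\g)$-invariance of $\hat P$ dualizes, through the compatibility $(E^{(1)},h)E^{[2]}=\gm(h^{(1)})Eh^{(2)}$, into the statement that the $\Tc$-coaction of $P$ in the $\End(V)$-comodule is exactly $\hat P_{21}$. The paper runs the same calculation in the opposite direction (it computes $\bar\iota(\hat P_{21})$ and shows it collapses to $1\tp P$), so there is no real difference of route. The problem is that your decisive step is asserted rather than derived, and the two places where you try to derive it both go wrong.

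First, the reduction to computing $\iota(1\tp Pv)$ for $v\in V$ with the coaction of $V$ is not the lemma and does not yield it: by multiplicativity of the coactions, $\iota(1\tp Pv)=v^{(1)}P^{(1)}\tp v^{[2]}P^{[2]}$ carries an extra factor $v^{(1)}$ multiplied against $P^{(1)}$ inside $\Tc$, so it is not equal to $\hat P_{21}$ applied to $1\tp v$, and you cannot strip $v$ off afterwards. The statement must be proved for $P$ itself in the $\End(V)$-comodule. Second, your justification of the key identity --- ``the $\eps$ collapses one leg via the comodule axiom evaluated against the counit'' --- is not the mechanism: evaluating the comodule axiom against the counit of $U_q(\g)$ only returns $P$. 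The actual argument, which is the entire proof, is a pairing computation: the invariance $\sum_i E_i\tp h\tr a_i=\sum_i \gm(h^{(1)})E_i h^{(2)}\tp a_i$ combined with $\eps(h\tr a)=(h,a)$ gives, after applying $\eps$ to the $\Ac$-leg,
$$
\sum_i (h,a_i)\,E_i=\gm(h^{(1)})Ph^{(2)}=(P^{(1)},h)\,P^{[2]}\qquad\text{for all }h\in U_q(\g),
$$
whence $P^{(1)}\tp P^{[2]}=\sum_i a_i\tp E_i=\hat P_{21}$, i.e.\ $\iota(1\tp P)=\hat P_{21}$. (Equivalently, as in the paper, one pairs the $\Tc$-leg of $\bar\iota(\hat P_{21})$ with $h$ and uses $(g\tr a,h)=(a,hg)$ together with $h^{(1)}\gm(h^{(2)})=\eps(h)$.) Your intermediate expressions ``$\hat P_2\,\chi(??)$'' and ``$\hat P_2^{(1)}\tp \hat P_1\eps(\hat P_2^{[2]})$'' --- the latter applies a $V$-type coaction index to an element of $\Tc$ --- show that this pairing computation was not actually carried out, so the proof as written has a gap precisely at its only nontrivial step.
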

\begin{proof}
  Indeed, the $U_q(\g)$-invariance of $\hat P$ implies $\sum_{i} h \tr a_i \tp E_i=\sum_{i}a_i \tp E_i\tl h$ for all $h\in U_q(\g)$.
  Pairing the $\Tc$-factor of $\bar \iota (\hat P_{21})$ with arbitrary $h\in U_q(\g)$ we get
  $$
  \sum_{i} \bigl(a_i\gm(E_i^{(1)}),h\bigr)\tp  E_i^{[2]}=\sum_{i} (a_i,h^{(1)})\bigl(\gm(E_i^{(1)}),h^{(2)}\bigl)\tp E_i^{[2]}
  =
  \sum_{i} (a_i,h^{(1)})\tp E_i\tl \gm(h^{(2)}).
  $$
The last term is $$ \sum_{i} \bigl(\gm(h^{(2)})\tr a_i,h^{(1)}\bigr)\tp  E_i=\sum_{i} \bigl(a_i,h^{(1)}\gm(h^{(2)})\bigr)\tp E_i=\sum_{i} \epsilon(h)\eps(a_i)\tp  E_i,$$ due to invariance of $\hat P$.
This proves the right equality, which implies the left one.
\end{proof}
The subspaces $(\Ac\tp V)\hat P$ and $(\Tc\tp VP)^\Bc$ in $\Tc\tp V$ are invariant under the left action of $\Ac$
restricted from $\Tc$. They are also $U_q(\g)$-modules as explained above.
\begin{propn}
  The map $\bar \iota\colon (\Ac\tp V)\hat P_{21}\to (\Tc\tp VP)^\Bc$ establishes an isomorphism of the left $\Ac$-modules
  and $U_q(\g)$-modules.
\end{propn}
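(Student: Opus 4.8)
The plan is to exploit the two facts established immediately before the statement: the automorphisms $\iota,\bar\iota$ are mutually inverse $\Tc$-linear (hence left $\Ac$-linear) automorphisms of $\Tc\tp V$ which are also $U_q(\g)$-equivariant, and by Lemma \ref{2projectors} they interchange the idempotents $\hat P_{21}\in\End(\Tc\tp V)$ and $1\tp P$. So I would argue that $\bar\iota$, being an $\Ac$-linear and $U_q(\g)$-linear automorphism of the whole space $\Tc\tp V$, restricts to an isomorphism between any $\iota$-image of a summand and that summand; concretely, applying $\bar\iota$ to the subspace $(\Ac\tp V)\hat P_{21}$ should land exactly in $(\Tc\tp VP)^\Bc$.

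First I would record that $(\Ac\tp V)\hat P_{21}=\iota(\Tc\tp V)^\Bc\cdot\hat P_{21}$, using the first identity in \eqref{trivials}, and that $(\Tc\tp V)^\Bc\hat P_{21}$ can be rewritten, via Lemma \ref{2projectors}'s identity $\iota(1\tp P)=\hat P_{21}$ together with $\Tc$-linearity of $\iota$, as $\iota\bigl((\Tc\tp V)^\Bc\cdot \bar\iota(\hat P_{21})\bigr)=\iota\bigl((\Tc\tp V)^\Bc(1\tp P)\bigr)$. Hence $(\Ac\tp V)\hat P_{21}=\iota\bigl((\Tc\tp V)^\Bc(1\tp P)\bigr)$. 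Applying $\bar\iota=\iota^{-1}$ then gives $\bar\iota\bigl((\Ac\tp V)\hat P_{21}\bigr)=(\Tc\tp V)^\Bc(1\tp P)$, and since $1\tp P$ projects $\Tc\tp V$ onto $\Tc\tp VP$ while $\Bc$-invariance is preserved under this projection (the $\Bc$-action only involves the $V$-factor through the coaction, which commutes with the idempotent $P$ because $P$ is $\Bc$-linear), the right side is exactly $(\Tc\tp VP)^\Bc$. This establishes the bijectivity; $\Ac$-linearity is immediate from $\Tc$-linearity of $\bar\iota$, and $U_q(\g)$-linearity follows from the $U_q(\g)$-equivariance of $\iota$ noted in the text together with the fact that $\hat P_{21}$ and $1\tp P$ are $U_q(\g)$-invariant idempotents, so the decompositions are decompositions of $U_q(\g)$-modules.

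The main obstacle I anticipate is the bookkeeping needed to justify that $(\Tc\tp V)^\Bc(1\tp P)=(\Tc\tp VP)^\Bc$, i.e. that taking $\Bc$-invariants commutes with applying $1\tp P$. One must check that the $\Bc$-coaction on $\Tc\tp V$ (through which invariance is defined) leaves $\Tc\tp VP$ stable; this is exactly the content of the Lemma preceding Proposition that $P$ commutes with $\Bc$, restated comodule-theoretically. I would spell this out: if $z\in(\Tc\tp V)^\Bc$ then $z(1\tp P)$ is still $\Bc$-invariant because the right $\Bc$-action factors through the right $V$-comodule structure and $P$ is a morphism of right $\Bc$-modules, hence a morphism of comodules; conversely any element of $(\Tc\tp VP)^\Bc$ equals its own image under $1\tp P$ and lies in $(\Tc\tp V)^\Bc$. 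Everything else is formal once $\iota\bar\iota=\bar\iota\iota=\id$ and Lemma \ref{2projectors} are in hand.
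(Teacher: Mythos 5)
Your proposal is correct and follows essentially the same route as the paper: both arguments rest on Lemma \ref{2projectors}, the identities (\ref{trivials}), the $\Bc$-invariance of $P$, and the fact that $\iota,\bar\iota$ are mutually inverse, $\Ac$-linear and $U_q(\g)$-equivariant. The one step to tighten is that $\iota(X)\hat P_{21}=\iota\bigl(X\cdot\bar\iota(\hat P_{21})\bigr)$ does not follow from left $\Tc$-linearity alone but from compatibility of the coactions on $V$ and $\End(V)$ under right multiplication (i.e. $(vE)^{(1)}\tp (vE)^{[2]}=v^{(1)}E^{(1)}\tp v^{[2]}E^{[2]}$), which is precisely the computation the paper displays when verifying $\bar\iota\bigl((1\tp v)\hat P_{21}\bigr)=\bar\iota(1\tp v)(1\tp P)$ on generators.
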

\begin{proof}
Indeed, pick up $v\in V$. Then by Lemma \ref{2projectors}, we get
$$\bar \iota \bigl((1\tp v)\hat P_{21}\bigr)= \sum_{i} a_i\gm(E_i^{(1)})\gm(v^{(1)})\tp  v^{[2]}E_i^{[2]}=\bar \iota (1\tp v)(1\tp P).$$
The right formula in (\ref{trivials}) and $\Bc$-invariance of $P$ yield $\bar \iota (1\tp v)P\in (\Tc\tp VP)^\Bc$, 
again by Lemma \ref{2projectors}.  
On the other hand, 
$$
\iota (1\tp vP)=\iota(1\tp v)\iota(1\tp P)=\iota (1\tp v)\hat P_{21}\in (\Ac\tp V)\hat P_{21}.
$$
 Now $U_q(\g)$- and $\Ac$-linearity of $\iota$ and $\bar\iota$ complete the proof.
\end{proof}
 Fix  an invariant idempotent $\hat P$ with the corresponding $\Bc$-invariant projector $P$ and consider the right $\Bc$-module  $X=VP$.
\begin{thm}
The $\Ac$-module $(\Tc\tp X)^\Bc$ is a quantization of the vector bundle $\Gamma(\Pbb^n,X)$.
\end{thm}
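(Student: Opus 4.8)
The plan is to deduce the statement from Theorem~\ref{hom-bund1} through the isomorphism $\bar\iota$ of the Proposition that immediately precedes it, so that essentially no new computation is required.

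First I would apply that Proposition with the chosen $U_q(\g)$-invariant idempotent $\hat P\in\End(V)\tp\Ac$ and the associated $\Bc$-invariant projector $P=(\id\tp\chi)(\hat P)$, for which $X=VP$. It furnishes
$$\bar\iota\colon (\Ac\tp V)\hat P_{21}\;\xrightarrow{\ \sim\ }\;(\Tc\tp X)^\Bc,$$
an isomorphism that is simultaneously one of left $\Ac$-modules and of $U_q(\g)$-modules, the $U_q(\g)$-action on the right-hand side being the one induced by the left translations on $\Tc$. Hence it suffices to exhibit the left $\Ac$-module $(\Ac\tp V)\hat P_{21}$ as a quantization of $\Gamma(\Pbb^n,X)$.

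Next I would run the proof of Theorem~\ref{hom-bund1} in its left-module form, which is completely parallel: for generic $q$ the module $(\Ac\tp V)\hat P_{21}$ is realized on the locally finite part of $\Hom_\C(M,M_{X_\la})$, hence is isomorphic as a $U_q(\g)$-module to the locally finite part of $M_{X_\la}\tp N$; by the Peter--Weyl decomposition of $\Tc$ the latter is the subspace of $\k$-invariants in $\C[G]\tp X$ with $G=GL(n+1)$, i.e.\ exactly $\Gamma(\Pbb^n,X)$, with fibre $X$ in the classical limit. Since $(\Ac\tp V)\hat P_{21}$ is the direct summand of the free module $\Ac\tp V$ cut out by the idempotent $\hat P_{21}$, and $\hat P$ is rational in $q$ and regular at $q=1$ by the Proposition on invariant projectors, the family is flat and of locally constant rank at all but finitely many values of $q$, including $q=1$; specializing at $q=1$ recovers $\Gamma(\Pbb^n,X)$ as a module over $\C[\Pbb^n]$, and the $U_q(\g)$-equivariance via left translations on $\Tc$ deforms the classical $U(\g)$-equivariance. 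Transporting this along $\bar\iota$ yields the assertion for $(\Tc\tp X)^\Bc$; the classical limit can also be seen directly, since $\Bc\to U(\k')$ and $\Tc\to\C[G]$ force $(\Tc\tp X)^\Bc\to(\C[G]\tp X)^{\k'}=\Gamma(\Pbb^n,X)$.

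The step I expect to cost the most care is checking that forming $\Bc$-invariants is compatible with specialization, i.e.\ that $(\Tc\tp X)^\Bc$ stays free over the relevant local ring (equivalently over $\C[[\hbar]]$) of the correct rank and does not jump at $q=1$. This is precisely what passing through $\bar\iota$ settles, because on the $(\Ac\tp V)\hat P_{21}$ side the invariance is encoded by the single idempotent $\hat P_{21}$, whose image has locally constant rank. So the genuine content of the theorem is the isomorphism of the preceding Proposition together with the flatness of $\hat P$, both of which are already in hand.
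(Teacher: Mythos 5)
Your proposal follows essentially the same route as the paper: transport the problem along the isomorphism $\bar\iota$ of the preceding Proposition to the left $\Ac$-module $(\Ac\tp V)\hat P_{21}$, identify that with the locally finite part of a Hom-space between $M$ and $M_{X_\la}$, and conclude via the Peter--Weyl decomposition that it quantizes $(\C[G]\tp X)^\k$. The only differences are cosmetic --- the paper writes $\Hom_\C(M_{X_\la},M)$ for the left-module realization where you keep $\Hom_\C(M,M_{X_\la})$, and your added remarks on flatness and constancy of rank of $\hat P$ make explicit what the paper leaves implicit --- so the argument is correct and matches the paper's.
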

\begin{proof}
The map $\iota \colon (\Tc\tp V)^\Bc\to \Ac\tp V$ is an isomorphism of $U_q(\g)$-modules.
Let $\hat P\in \End(V)\tp \Ac$ be an invariant projector.
The left $U_q(\g)$-module $(\Ac\tp V)\hat P_{21}$ is isomorphic to $\Hom_\C(\!M_{X_\la},M)\simeq \Hom_{U_q(\k)}(\Tc,X)$. Due to the Peter-Weyl decomposition, the latter
is a quantization of $(\C[G]\tp X)^\k$ as required.
\end{proof}

\end{document}